\documentclass[leqno,12pt]{article} 
\setlength{\textheight}{21cm}
\setlength{\textwidth}{16cm}
\setlength{\oddsidemargin}{0cm}
\setlength{\evensidemargin}{0cm}
\setlength{\topmargin}{0cm}
\usepackage{amsmath, amssymb}
\usepackage{amsthm} 
%
%
%
\theoremstyle{plain} 
\newtheorem{theorem}{\indent\sc Theorem}[section]
\newtheorem{lemma}[theorem]{\indent\sc Lemma}
\newtheorem{corollary}[theorem]{\indent\sc Corollary}
\newtheorem{proposition}[theorem]{\indent\sc Proposition}

\theoremstyle{definition} 

\newtheorem{remark}[theorem]{\indent\sc Remark}
\newtheorem{example}[theorem]{\indent\sc Example}

%

%

\makeatletter
%
\makeatother
%
\title{On certain symmetries of $\mathbb R^3$ with a diagonal metric}
%
\author{Adara M. Blaga}

\date{}
%
\pagestyle{myheadings}

\begin{document}

\maketitle

\markboth{{\small\it {\hspace{1cm} On certain symmetries of $\mathbb R^3$ with a diagonal metric}}}{\small\it{On certain symmetries of $\mathbb R^3$ with a diagonal metric\hspace{1cm}}}

\footnote{ 
2010 \textit{Mathematics Subject Classification}.
53B25; 53B50.
}
\footnote{ 
\textit{Key words and phrases}.
Diagonal metric; Killing vector field.
}

\begin{abstract}
We determine Killing vector fields on the $3$-dimensional space $\mathbb R^3$ endowed with a special diagonal metric.
\end{abstract}

\section{Preliminaries}

The aim of the paper is to determine Killing vector fields on the $3$-dimensional space $\mathbb R^3$ endowed with some special diagonal metrics, extending the results for the $2$-dimensional case treated in \cite{bl24}. Due to the fact that determining the Killing vector fields for a general diagonal metric is an extended study, we begin it here and we continue it, for other types of Lam\'{e} coefficients, in a different work.

Let ${g}$ be a Riemannian metric on $\mathbb R^3$ given by
$${g}=\frac{1}{f_1^2}dx^1\otimes dx^1+\frac{1}{f_2^2}dx^2\otimes dx^2+\frac{1}{f_3^2}dx^3\otimes dx^3,$$
where $f_1$, $f_2$ and $f_3$ are smooth functions nowhere zero on $\mathbb R^3$, and $x^1,x^2,x^3$ stand for the standard coordinates in $\mathbb R^3$. Let
$$\Big\{E_1:=f_1\frac{\partial}{\partial x^1}, \ \ E_2:=f_2\frac{\partial}{\partial x^2}, \ \ E_3:=f_3\frac{\partial}{\partial x^3}\Big\}$$
be a local orthonormal frame. We will denote as follows:
$$\frac{f_2}{f_1}\cdot\frac{\partial f_1}{\partial x^2}=:f_{12}, \ \ \frac{f_3}{f_1}\cdot\frac{\partial f_1}{\partial x^3}=:f_{13}, \ \ \frac{f_1}{f_2}\cdot\frac{\partial f_2}{\partial x^1}=:f_{21},$$$$\frac{f_3}{f_2}\cdot\frac{\partial f_2}{\partial x^3}=:f_{23}, \ \ \frac{f_1}{f_3}\cdot\frac{\partial f_3}{\partial x^1}=:f_{31}, \ \ \frac{f_2}{f_3}\cdot\frac{\partial f_3}{\partial x^2}=:f_{32}.$$
The Levi-Civita connection $\nabla$ of $g$ is given by (see \cite{balr}):
$$\nabla_{E_1}E_1=f_{12}E_2+f_{13}E_3, \ \ \nabla_{E_2}E_2=f_{21}E_1+f_{23}E_3, \ \ \nabla_{E_3}E_3=f_{31}E_1+f_{32}E_2,$$
$$\nabla_{E_1}E_2=-f_{12}E_1, \ \ \nabla_{E_2}E_3=-f_{23}E_2, \ \ \nabla_{E_3}E_1=-f_{31}E_3,$$
$$\nabla_{E_1}E_3=-f_{13}E_1, \ \ \nabla_{E_3}E_2=-f_{32}E_3, \ \ \nabla_{E_2}E_1=-f_{21}E_2.$$

In the rest of the paper, whenever a function $f$ depends only on some of its variables, we will write in its argument only that variables in order to emphasize this fact, for example, $f(x^i)$, $f(x^i,x^j)$.

\section{Killing vector fields w.r.t. these metrics}

We recall that a vector field $V$ on $(\mathbb R^3,g)$ is called a \textit{Killing vector field} \cite{killing} if the Lie derivative $\pounds$ of the metric $g$ in the direction of $V$ vanishes, i.e.,
\begin{equation*}
(\pounds_Vg)(X,Y):=V(g(X,Y))-g([V,X],Y)-g(X,[V,Y])=0
\end{equation*}
for any smooth vector fields $X$ and $Y$ on $\mathbb R^3$.

In certain particular cases, we shall determine the Killing vector fields, as well as the relation between two Killing vector fields for the metric $g$.

Let $V=\sum_{k=1}^3V^kE_k$, with $V^k$, $k\in \{1,2,3\}$, smooth functions on $\mathbb R^3$. Then
\begin{align*}
(\pounds_Vg)(E_i,E_j)&=g(\nabla_{E_i}V,E_j)+g(E_i,\nabla_{E_j}V)\\
&=E_i(V^j)+E_j(V^i)+\sum_{k=1}^3V^k\{g(\nabla_{E_i}E_k,E_j)+g(E_i,\nabla_{E_j}E_k)\}
\end{align*}
for any $i,j\in \{1,2,3\}$, which is equivalent to
\begin{equation*}
\left\{
    \begin{aligned}
&(\pounds_Vg)(E_1,E_1)=2\{E_1(V^1)-f_{12}V^2-f_{13}V^3\}\\
&(\pounds_Vg)(E_2,E_2)=2\{E_2(V^2)-f_{21}V^1-f_{23}V^3\}\\
&(\pounds_Vg)(E_3,E_3)=2\{E_3(V^3)-f_{31}V^1-f_{32}V^2\}\\
&(\pounds_Vg)(E_1,E_2)=E_1(V^2)+E_2(V^1)+f_{12}V^1+f_{21}V^2\\
&(\pounds_Vg)(E_2,E_3)=E_2(V^3)+E_3(V^2)+f_{23}V^2+f_{32}V^3\\
&(\pounds_Vg)(E_3,E_1)=E_3(V^1)+E_1(V^3)+f_{31}V^3+f_{13}V^1
    \end{aligned}
  \right. \ ,
\end{equation*}
and we can state:

\begin{proposition}
The vector field $V=\sum_{k=1}^3V^kE_k$ is a Killing vector field if and only if
\begin{equation}\label{s1}
\left\{
    \begin{aligned}
&f_1\frac{\displaystyle \partial V^1}{\displaystyle \partial x^1}-\frac{\displaystyle f_2}{\displaystyle f_1}\cdot\frac{\displaystyle \partial f_1}{\displaystyle \partial x^2}V^2-\frac{\displaystyle f_3}{\displaystyle f_1}\cdot\frac{\displaystyle \partial f_1}{\displaystyle \partial x^3}V^3=0\\
&f_2\frac{\displaystyle \partial V^2}{\displaystyle \partial x^2}-\frac{\displaystyle f_1}{\displaystyle f_2}\cdot\frac{\displaystyle \partial f_2}{\displaystyle \partial x^1}V^1-\frac{\displaystyle f_3}{\displaystyle f_2}\cdot\frac{\displaystyle \partial f_2}{\displaystyle \partial x^3}V^3=0\\
&f_3\frac{\displaystyle \partial V^3}{\displaystyle \partial x^3}-\frac{\displaystyle f_1}{\displaystyle f_3}\cdot\frac{\displaystyle \partial f_3}{\displaystyle \partial x^1}V^1-\frac{\displaystyle f_2}{\displaystyle f_3}\cdot\frac{\displaystyle \partial f_3}{\displaystyle \partial x^2}V^2=0\\
&f_1\frac{\displaystyle \partial V^2}{\displaystyle \partial x^1}+f_2\frac{\displaystyle \partial V^1}{\displaystyle \partial x^2}+\frac{\displaystyle f_2}{\displaystyle f_1}\cdot\frac{\displaystyle \partial f_1}{\displaystyle \partial x^2}V^1+\frac{\displaystyle f_1}{\displaystyle f_2}\cdot\frac{\displaystyle \partial f_2}{\displaystyle \partial x^1}V^2=0\\
&f_2\frac{\displaystyle \partial V^3}{\displaystyle \partial x^2}+f_3\frac{\displaystyle \partial V^2}{\displaystyle \partial x^3}+\frac{\displaystyle f_3}{\displaystyle f_2}\cdot\frac{\displaystyle \partial f_2}{\displaystyle \partial x^3}V^2+\frac{\displaystyle f_2}{\displaystyle f_3}\cdot\frac{\displaystyle \partial f_3}{\displaystyle \partial x^2}V^3=0\\
&f_3\frac{\displaystyle \partial V^1}{\displaystyle \partial x^3}+f_1\frac{\displaystyle \partial V^3}{\displaystyle \partial x^1}+\frac{\displaystyle f_1}{\displaystyle f_3}\cdot\frac{\displaystyle \partial f_3}{\displaystyle \partial x^1}V^3+\frac{\displaystyle f_3}{\displaystyle f_1}\cdot\frac{\displaystyle \partial f_1}{\displaystyle \partial x^3}V^1=0
    \end{aligned}
  \right. \ .
\end{equation}
\end{proposition}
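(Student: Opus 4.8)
The plan is to exploit that $\pounds_V g$ is a symmetric $(0,2)$-tensor field and therefore vanishes identically if and only if it vanishes on every pair of frame vectors. Since $g$ is symmetric, so is $\pounds_V g$, and because $\{E_1,E_2,E_3\}$ is a pointwise basis, the tensorial condition $\pounds_V g=0$ is equivalent to the six scalar equations $(\pounds_V g)(E_i,E_j)=0$ for $1\le i\le j\le 3$, the case $i>j$ being redundant by symmetry. Thus the whole statement reduces to rewriting these six equations explicitly in coordinates.

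First I would establish the identity $(\pounds_V g)(X,Y)=g(\nabla_X V,Y)+g(X,\nabla_Y V)$ directly from the definition: expanding $V(g(X,Y))$ by the metric compatibility of $\nabla$ and replacing the brackets via the torsion-free property $[V,X]=\nabla_V X-\nabla_X V$ makes the $\nabla_V$-terms cancel, leaving the stated symmetric expression. Applying this with $X=E_i$, $Y=E_j$ and expanding $\nabla_{E_i}V=\sum_k\big(E_i(V^k)E_k+V^k\nabla_{E_i}E_k\big)$ by the Leibniz rule yields
$$(\pounds_V g)(E_i,E_j)=E_i(V^j)+E_j(V^i)+\sum_{k=1}^3 V^k\big\{g(\nabla_{E_i}E_k,E_j)+g(E_i,\nabla_{E_j}E_k)\big\},$$
which is precisely the formula recorded just before the statement.

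Next I would evaluate the connection terms using the Levi-Civita formulas listed in the Preliminaries, keeping in mind that $\{E_k\}$ is orthonormal so that $g(E_a,E_b)=\delta_{ab}$ and only a single coefficient survives in each inner product. Reading off $\nabla_{E_i}E_k$ term by term collapses the six expressions to the bracketed forms already displayed, e.g. $(\pounds_V g)(E_1,E_1)=2\{E_1(V^1)-f_{12}V^2-f_{13}V^3\}$ and $(\pounds_V g)(E_1,E_2)=E_1(V^2)+E_2(V^1)+f_{12}V^1+f_{21}V^2$, and analogously for the remaining four. Setting each of these equal to zero gives the Killing condition in frame form.

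The final step is purely a change of notation: substituting $E_i(V^j)=f_i\,\partial V^j/\partial x^i$ (no summation over $i$) and unfolding the abbreviations $f_{ij}$ into $\tfrac{f_j}{f_i}\cdot\tfrac{\partial f_i}{\partial x^j}$ turns the six equations into the system \eqref{s1}. No genuine obstacle arises here; the only point demanding care is the correct bookkeeping of the connection coefficients and their signs, so that orthonormality leaves exactly the terms appearing in \eqref{s1} and nothing else, everything beyond that being routine.
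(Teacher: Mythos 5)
Your proposal is correct and follows essentially the same route as the paper: both derive $(\pounds_Vg)(E_i,E_j)=g(\nabla_{E_i}V,E_j)+g(E_i,\nabla_{E_j}V)$ from metric compatibility and the torsion-free property, evaluate the connection terms in the orthonormal frame using the Levi-Civita formulas from the Preliminaries, reduce to the six components with $i\le j$ by symmetry, and then unfold $E_i(V^j)=f_i\,\partial V^j/\partial x^i$ and the abbreviations $f_{ij}$ to obtain the system \eqref{s1}. The only cosmetic difference is that the paper carries out this computation in the text immediately before stating the proposition rather than inside a proof environment.
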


A natural question is: \textit{For which functions $f_1$, $f_2$ and $f_3$, the basis vector fields $E_1$, $E_2$ and $E_3$ are Killing vector fields w.r.t. $g$?} And we can state:
\begin{proposition}\label{asd}
$E_1$ is a Killing vector field on $(\mathbb R^3,g)$ if and only if
\begin{equation*}
\left\{
    \begin{aligned}
&f_1=f_1(x^1)\\
&f_2=f_2(x^2,x^3)\\
&f_3=f_3(x^2,x^3)
    \end{aligned}
  \right. \ .
\end{equation*}
\end{proposition}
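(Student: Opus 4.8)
The plan is to feed the particular vector field $V=E_1$, i.e.\ the component functions $V^1\equiv 1$, $V^2\equiv 0$, $V^3\equiv 0$, into the six equations of system \eqref{s1} and read off which constraints on $f_1,f_2,f_3$ survive. Since $V^1$ is constant and $V^2,V^3$ vanish identically, every derivative of the $V^k$ drops out, so each equation collapses to at most a single algebraic term, and the computation is purely a matter of bookkeeping.

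First I would observe that the first and fifth equations of \eqref{s1} hold automatically: in the first, the only surviving term $f_1\,\partial V^1/\partial x^1$ vanishes because $V^1$ is constant, while the coefficients of $V^2$ and $V^3$ are irrelevant; in the fifth, every term carries a factor $V^2$ or $V^3$ and hence vanishes. The remaining four equations each reduce to the vanishing of exactly one structure coefficient: the second gives $f_{21}=0$, the third gives $f_{31}=0$, the fourth gives $f_{12}=0$, and the sixth gives $f_{13}=0$.

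The final step is to translate these four conditions back into statements about the $f_i$ via the defining relations $f_{12}=\frac{f_2}{f_1}\,\partial f_1/\partial x^2$, $f_{13}=\frac{f_3}{f_1}\,\partial f_1/\partial x^3$, $f_{21}=\frac{f_1}{f_2}\,\partial f_2/\partial x^1$ and $f_{31}=\frac{f_1}{f_3}\,\partial f_3/\partial x^1$. Here the only place the hypotheses enter is that $f_1,f_2,f_3$ are nowhere zero, which lets me cancel the nonvanishing prefactors and conclude $\partial f_1/\partial x^2=\partial f_1/\partial x^3=0$, together with $\partial f_2/\partial x^1=0$ and $\partial f_3/\partial x^1=0$. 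These say precisely that $f_1$ is independent of $x^2$ and $x^3$ (so $f_1=f_1(x^1)$), while $f_2$ and $f_3$ are each independent of $x^1$ (so $f_2=f_2(x^2,x^3)$ and $f_3=f_3(x^2,x^3)$), which is the asserted equivalence. Since each implication above is reversible, both directions follow simultaneously, and I do not anticipate any genuine obstacle: the argument is a direct specialization of \eqref{s1}, with the nowhere-vanishing condition doing the only real work.
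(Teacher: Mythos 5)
Your proposal is correct and coincides with the paper's own proof: both substitute $V^1=1$, $V^2=V^3=0$ into system \eqref{s1}, observe that the first and fifth equations hold trivially while the remaining four reduce to $\partial f_1/\partial x^2=\partial f_1/\partial x^3=\partial f_2/\partial x^1=\partial f_3/\partial x^1=0$ after cancelling the nowhere-vanishing prefactors. Your write-up is in fact slightly more detailed than the paper's, which records only the four resulting conditions, but the argument is the same.
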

\begin{proof}
Replacing $V^1=1$, $V^2=V^3=0$ in (\ref{s1}), we get
\begin{equation*}
\left\{
    \begin{aligned}
&\frac{\displaystyle \partial f_2}{\displaystyle \partial x^1}=0\\
&\frac{\displaystyle \partial f_3}{\displaystyle \partial x^1}=0\\
&\frac{\displaystyle \partial f_1}{\displaystyle \partial x^2}=0\\
&\frac{\displaystyle \partial f_1}{\displaystyle \partial x^3}=0
    \end{aligned}
  \right. \ ,
\end{equation*}
hence, we get the conclusion.
\end{proof}

\begin{example}
The vector field
$E_1=e^{x^1}\displaystyle \frac{\partial }{\partial x^1}$
is a Killing vector field on
\begin{equation*}
\left(\mathbb R^3, \ g=e^{-2x^1}dx^1\otimes dx^1+e^{x^2+x^3}dx^2\otimes dx^2+e^{x^2x^3}dx^3\otimes dx^3\right).
\end{equation*}
\end{example}

Similarly, for the other two vector fields $E_2$ and $E_3$, we have:
\begin{corollary}
(i) $E_2$ is a Killing vector field on $(\mathbb R^3,g)$ if and only if
\begin{equation*}
\left\{
    \begin{aligned}
&f_1=f_1(x^1,x^3)\\
&f_2=f_2(x^2)\\
&f_3=f_3(x^1,x^3)
    \end{aligned}
  \right. \ .
\end{equation*}

(ii) $E_3$ is a Killing vector field on $(\mathbb R^3,g)$ if and only if
\begin{equation*}
\left\{
    \begin{aligned}
&f_1=f_1(x^1,x^2)\\
&f_2=f_2(x^1,x^2)\\
&f_3=f_3(x^3)
    \end{aligned}
  \right. \ .
\end{equation*}
\end{corollary}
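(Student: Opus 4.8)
The plan is to repeat verbatim the argument used for Proposition~\ref{asd}, now feeding the other two coordinate frame vectors into the characterization (\ref{s1}). For part~(i) I would set $V^2=1$ and $V^1=V^3=0$, which represents $V=E_2$; for part~(ii) I would set $V^3=1$ and $V^1=V^2=0$, representing $V=E_3$. In each case every component $V^k$ is constant, so all the derivative terms $\partial V^k/\partial x^j$ drop out of (\ref{s1}), and I am left with a purely algebraic condition on the $f_i$ read off from each of the six lines.

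For part~(i), substituting $V^2=1$, $V^1=V^3=0$ collapses the second and sixth equations of (\ref{s1}) to the trivial identity $0=0$, while the remaining four reduce to
\begin{equation*}
\frac{f_2}{f_1}\cdot\frac{\partial f_1}{\partial x^2}=0, \qquad \frac{f_2}{f_3}\cdot\frac{\partial f_3}{\partial x^2}=0, \qquad \frac{f_1}{f_2}\cdot\frac{\partial f_2}{\partial x^1}=0, \qquad \frac{f_3}{f_2}\cdot\frac{\partial f_2}{\partial x^3}=0.
\end{equation*}
Here is the single place where a hypothesis is genuinely used: since $f_1,f_2,f_3$ are nowhere zero, each prefactor $f_j/f_i$ is nonvanishing and may be cancelled, leaving $\partial f_1/\partial x^2=\partial f_3/\partial x^2=\partial f_2/\partial x^1=\partial f_2/\partial x^3=0$. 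These four identities say precisely that $f_1$ and $f_3$ are independent of $x^2$ while $f_2$ is independent of both $x^1$ and $x^3$, i.e. $f_1=f_1(x^1,x^3)$, $f_3=f_3(x^1,x^3)$ and $f_2=f_2(x^2)$, which is the claimed system. The converse is immediate, since under these dependencies all six expressions in (\ref{s1}) vanish when $V=E_2$.

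Part~(ii) is entirely parallel with the role of the index $2$ taken over by $3$: setting $V^3=1$, $V^1=V^2=0$ annihilates the third and fourth equations and, after the same cancellation of the nonvanishing ratios, leaves $\partial f_1/\partial x^3=\partial f_2/\partial x^3=\partial f_3/\partial x^1=\partial f_3/\partial x^2=0$, equivalently $f_1=f_1(x^1,x^2)$, $f_2=f_2(x^1,x^2)$ and $f_3=f_3(x^3)$.

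I do not anticipate a genuine obstacle here: the computation is a mechanical specialization of the Killing system (\ref{s1}), structurally identical to the case of $E_1$ already treated. The only points demanding care are the bookkeeping of which two of the six equations degenerate to $0=0$ in each case, and the explicit invocation of the nowhere-zero hypothesis to justify dividing out the coefficient ratios before reading off the vanishing partial derivatives.
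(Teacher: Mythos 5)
Your proposal is correct and follows exactly the route the paper intends: the corollary is stated with the remark ``Similarly,'' meaning its proof is the same substitution into the Killing system (\ref{s1}) used for Proposition \ref{asd}, now with $V^2=1$ (resp.\ $V^3=1$) and the other components zero, and your bookkeeping of which equations trivialize and which partial derivatives must vanish is accurate in both cases. Nothing further is needed.
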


\begin{lemma}
If $f_i=f_i(x^1)$ for any $i\in \{1,2,3\}$, then $V=\sum_{k=1}^3V^kE_k$ is a Killing vector field if and only if
\begin{equation}\label{s2}
\left\{
    \begin{aligned}
&\frac{\displaystyle \partial V^1}{\displaystyle \partial x^1}=0\\
&f_2\frac{\displaystyle \partial V^2}{\displaystyle \partial x^2}-f_1\frac{\displaystyle f_2'}{\displaystyle f_2}V^1=0\\
&f_3\frac{\displaystyle \partial V^3}{\displaystyle \partial x^3}-f_1\frac{\displaystyle f_3'}{\displaystyle f_3}V^1=0\\
&f_1\frac{\displaystyle \partial V^2}{\displaystyle \partial x^1}+f_2\frac{\displaystyle \partial V^1}{\displaystyle \partial x^2}+f_1\frac{\displaystyle f_2'}{\displaystyle f_2}V^2=0\\
&f_2\frac{\displaystyle \partial V^3}{\displaystyle \partial x^2}+f_3\frac{\displaystyle \partial V^2}{\displaystyle \partial x^3}=0\\
&f_3\frac{\displaystyle \partial V^1}{\displaystyle \partial x^3}+f_1\frac{\displaystyle \partial V^3}{\displaystyle \partial x^1}+f_1\frac{\displaystyle f_3'}{\displaystyle f_3}V^3=0
    \end{aligned}
  \right. \ .
\end{equation}
\end{lemma}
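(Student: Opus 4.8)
The plan is to specialize the general Killing system (\ref{s1}) from the Proposition to the hypothesis $f_i=f_i(x^1)$ for $i\in\{1,2,3\}$. Since none of the $f_i$ depends on $x^2$ or $x^3$, the partial derivatives $\partial f_1/\partial x^2$, $\partial f_1/\partial x^3$, $\partial f_2/\partial x^3$ and $\partial f_3/\partial x^2$ all vanish, while $\partial f_i/\partial x^1=f_i'$. First I would record the resulting values of the six coefficients introduced in the Preliminaries: $f_{12}=f_{13}=f_{23}=f_{32}=0$, whereas $f_{21}=\frac{f_1}{f_2}\,\frac{\partial f_2}{\partial x^1}=f_1\frac{f_2'}{f_2}$ and $f_{31}=\frac{f_1}{f_3}\,\frac{\partial f_3}{\partial x^1}=f_1\frac{f_3'}{f_3}$.

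Substituting these values into the six equations of (\ref{s1}) is then a direct matter, which I would carry out equation by equation. The first equation becomes $f_1\,\partial V^1/\partial x^1=0$; dividing by $f_1$, which is nowhere zero by hypothesis, yields the first equation of (\ref{s2}). The second and third equations reduce immediately to the second and third equations of (\ref{s2}), since the $f_{23}V^3$ and $f_{32}V^2$ terms drop out and $f_{21}$, $f_{31}$ take the forms above. In the fourth equation the coefficient $f_{12}$ of $V^1$ vanishes and only the $f_{21}V^2$ term survives, giving the fourth equation of (\ref{s2}); the fifth equation loses both of its zero-order terms and becomes the fifth equation of (\ref{s2}); and the sixth retains only the $f_{31}V^3$ term, producing the sixth equation of (\ref{s2}).

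The equivalence then follows from the Proposition, since (\ref{s1}) characterizes Killing vector fields and the substitution above is reversible under the standing hypothesis. The computation is entirely routine, and I do not expect a genuine obstacle: the only points requiring care are the use of the nowhere-vanishing of $f_1$ to pass from $f_1\,\partial V^1/\partial x^1=0$ to $\partial V^1/\partial x^1=0$, and the consistent rewriting of $f_{21}$ and $f_{31}$ as $f_1 f_2'/f_2$ and $f_1 f_3'/f_3$ respectively.
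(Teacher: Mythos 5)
Your proposal is correct and follows exactly the paper's route: the paper's proof is the one-line observation that the system follows immediately from (\ref{s1}), and you have simply carried out that specialization explicitly, with the coefficients $f_{12}=f_{13}=f_{23}=f_{32}=0$, $f_{21}=f_1f_2'/f_2$, $f_{31}=f_1f_3'/f_3$ all computed correctly. Your added care about dividing by the nowhere-vanishing $f_1$ in the first equation is sound and makes the argument fully rigorous.
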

\begin{proof}
It follows immediately from \eqref{s1}.
\end{proof}

\begin{theorem}\label{te1}
If $f_1=f_1(x^1)$, $f_2=f_2(x^1)$, $f_3=k_3\in \mathbb R\setminus\{0\}$, then $V=\sum_{k=1}^3V^kE_k$ is a Killing vector field if and only if one of the following assertions hold:

\hspace{0.5cm}\begin{equation*}
(i) \
\left\{
    \begin{aligned}
&V^1=0\\
&V^2(x^1)=\displaystyle\frac{c_1}{f_2(x^1)}, \ \ c_1\in \mathbb R\\
&V^3=c_2, \ \ c_2\in \mathbb R
\end{aligned}
  \right. \ ;
\end{equation*}

\hspace{0.5cm}
\begin{equation*}
(ii) \
\left\{
    \begin{aligned}
&V^1(x^2,x^3)=c_1x^2+c_2x^3+c_3, \ \ c_1,c_2,c_3\in \mathbb R\\
&V^2(x^1,x^3)=-c_1k_2F(x^1)-c_4k_2x^3+c_5, \ \ c_4,c_5\in \mathbb R\\
&V^3(x^1,x^2)=-c_2k_3F(x^1)+c_4k_3x^2+c_6, \ \ c_6\in \mathbb R
\end{aligned}
  \right. \ ,
\end{equation*}
where $F'=\displaystyle\frac{1}{f_1}$ and $f_2=k_2$ (is constant);

\hspace{0.5cm}
\begin{equation*}
(iii) \
\left\{
    \begin{aligned}
&V^1(x^2)=c_1x^2+c_2, \ \ c_1, c_2\in \mathbb R\\
&V^2(x^1,x^2)=\left(f_1\frac{f_2'}{f_2^2}\right)(x^1)\left[\displaystyle\frac{c_1}{2}(x^2)^2+c_2x^2+c_3\right]+\displaystyle \frac{c_1F_0(x^1)+c_4}{f_2(x^1)}, \ \ c_3, c_4\in \mathbb R\\
&V^3=c_3
\end{aligned}
  \right. \ ,
\end{equation*}
with $\displaystyle\frac{f_1'}{f_1}\cdot\displaystyle\frac{f_2'}{f_2}+\left(\displaystyle\frac{f_2'}{f_2}\right)'=0$ and $f_2$ nonconstant, where $F_0'=-\displaystyle\frac{f_2^2}{f_1}$;

\hspace{0.5cm}
\begin{equation*}
(iv) \ \left\{
    \begin{aligned}
&V^1(x^2)=c_1\cos({\sqrt{k}x^2})+c_2\sin({\sqrt{k}x^2}), \ \ c_1, c_2\in \mathbb R\\
&V^2(x^1,x^2)=\left(f_1\frac{f_2'}{f_2^2}\right)(x^1)\left[\displaystyle\frac{1}{\sqrt{k}}[c_1\sin({\sqrt{k}x^2})-c_2\cos({\sqrt{k}x^2})]+c_3\right]\\
&\hspace{72pt}+\displaystyle \frac{c_3kF_0(x^1)+c_4}{f_2(x^1)}, \ \ c_3, c_4\in \mathbb R\\
&V^3=c_3
\end{aligned}
  \right. \ ,
\end{equation*}
with $k:=\left(\displaystyle\frac{f_1}{f_2}\right)^2\left[\displaystyle\frac{f_1'}{f_1}\cdot\displaystyle\frac{f_2'}{f_2}+\left(\displaystyle\frac{f_2'}{f_2}\right)'\right]\in (0,+\infty)$, where $F_0'=-\displaystyle\frac{f_2^2}{f_1}$;

\hspace{0.5cm}
\begin{equation*}
(v) \
\left\{
    \begin{aligned}
&V^1(x^2)=c_1e^{\sqrt{-k}x^2}+c_2e^{-\sqrt{-k}x^2}, \ \ c_1, c_2\in \mathbb R\\
&V^2(x^1,x^2)=\left(f_1\frac{f_2'}{f_2^2}\right)(x^1)\left[\displaystyle\frac{1}{\sqrt{-k}}\left(c_1e^{\sqrt{-k}x^2}-c_2e^{-\sqrt{-k}x^2}\right)+c_3\right]\\
&\hspace{72pt}+\displaystyle \frac{c_3kF_0(x^1)+c_4}{f_2(x^1)}, \ \ c_3, c_4\in \mathbb R\\
&V^3=c_3
\end{aligned}
  \right. \ ,
\end{equation*}
with $k:=\left(\displaystyle\frac{f_1}{f_2}\right)^2\left[\displaystyle\frac{f_1'}{f_1}\cdot\displaystyle\frac{f_2'}{f_2}+\left(\displaystyle\frac{f_2'}{f_2}\right)'\right]\in (-\infty,0)$, where $F_0'=-\displaystyle\frac{f_2^2}{f_1}$.
\end{theorem}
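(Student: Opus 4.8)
The plan is to specialize the system \eqref{s2} of the preceding Lemma to the present data and then untangle it by separation of variables. Since $f_3=k_3$ is constant, $f_3'=0$, so the third and sixth equations of \eqref{s2} reduce to $\partial V^3/\partial x^3=0$ and $k_3\,\partial V^1/\partial x^3+f_1\,\partial V^3/\partial x^1=0$, while the first gives $\partial V^1/\partial x^1=0$. Hence from the start $V^1=V^1(x^2,x^3)$ and $V^3=V^3(x^1,x^2)$, and the remaining content of the system is carried by the single $x^1$-function $A:=f_1 f_2'/f_2^2$ occurring in the second and fourth equations.

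First I would extract a master equation for $V^1$. Differentiating the fourth equation of \eqref{s2} in $x^2$ and substituting $\partial V^2/\partial x^2=A\,V^1$ from the second equation eliminates $V^2$ and leaves $\partial^2 V^1/\partial (x^2)^2+k\,V^1=0$, where a short computation identifies the coefficient as exactly $k=(f_1/f_2)^2[(f_1'/f_1)(f_2'/f_2)+(f_2'/f_2)']$. Since $V^1$ is independent of $x^1$ whereas $k=k(x^1)$, either $V^1\equiv 0$ or $k$ must be a genuine constant; when $k$ is not constant only the first option survives, and integrating the remaining equations yields assertion (i). The case $f_2$ constant is degenerate, since then $A\equiv 0$ and $k\equiv 0$, and I would handle it separately, integrating the decoupled linear relations with the help of $F'=1/f_1$ to obtain the affine family (ii).

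For $f_2$ nonconstant and $k$ constant I would solve the master ODE by the sign of $k$: $k=0$ gives $V^1$ affine in $x^2$ (iii), $k>0$ the trigonometric solution (iv), and $k<0$ the exponential one (v). Once $V^1$ is known, integrating the second equation of \eqref{s2} in $x^2$ and inserting the result into the fourth equation reduces $V^2$ to a first-order linear ODE in $x^1$ whose integrating factor is $f_2$; this is what produces the antiderivative $F_0$ with $F_0'=-f_2^2/f_1$ and the displayed closed forms for $V^2$. The free constants are then matched against the surviving equations.

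The step I expect to be the main obstacle is showing that $V^1$ depends on $x^2$ alone and that $V^3$ is constant. The sixth equation makes $k_3\,\partial V^1/\partial x^3$ a function of $x^2$ only, so $V^1=\phi(x^2)\,x^3+\eta(x^2)$ and, correspondingly, $V^3=-k_3\phi(x^2)F(x^1)+\gamma(x^2)$. Pushing these through the second, fourth and fifth equations produces a relation in which $\gamma''(x^2)$ equals an explicit $x^1$-function --- assembled from $k$, $F$ and $f_1 f_2'/f_2^3$ --- times $\phi(x^2)$. As the left side is free of $x^1$, either $\phi\equiv 0$ (the wanted conclusion, after which a last integration forces $V^3$ constant) or that $x^1$-function is constant. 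Ruling out the second alternative is the delicate part: it is automatic generically because $F$ is strictly monotone, but it breaks down exactly when $A=f_1 f_2'/f_2^2$ is itself constant, and this borderline configuration is the one I would have to scrutinize to be certain that (i)--(v) exhaust all Killing fields.
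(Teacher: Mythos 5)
Your overall route is the paper's: the same specialization of \eqref{s2}, the same separation $V^1=\phi(x^2)x^3+\eta(x^2)$, $V^3=-k_3\phi(x^2)F(x^1)+\gamma(x^2)$ obtained from the sixth equation, the same master coefficient $k=\bigl(\tfrac{f_1}{f_2}\bigr)^2\bigl[\tfrac{f_1'}{f_1}\cdot\tfrac{f_2'}{f_2}+\bigl(\tfrac{f_2'}{f_2}\bigr)'\bigr]$ (the paper's $h$), and the same integrating factor $f_2$ producing $F_0$ with $F_0'=-f_2^2/f_1$, followed by the trichotomy on the sign of $k$. But there is a genuine gap, and it sits exactly where you flag it: the borderline case $A:=f_1f_2'/f_2^2\equiv k_0\in\mathbb R\setminus\{0\}$ is not resolved, and your proof of exhaustiveness of (i)--(v) is therefore incomplete. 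Your identification of the borderline is correct --- since the bracket in $\gamma''=\bigl[\tfrac{A}{f_2}+kF\bigr](x^1)\,\phi(x^2)$ has $x^1$-derivative $2A'/f_2$, constancy of the bracket is equivalent to constancy of $A$ --- and the configuration is realized by actual Lam\'e coefficients (e.g.\ $f_1=f_2=ae^{bt}$, as the paper's remark after Proposition \ref{pr1} notes). In that case the separation argument does not force $\phi\equiv 0$, so a priori there could be Killing fields with $V^1$ genuinely depending on $x^3$, outside all five families. The paper devotes its entire case (B) to closing this: with $A\equiv k_0$ one has $k=k_0^2$, $\phi(x^2)=a_1\cos(k_0x^2)+a_2\sin(k_0x^2)$ (for $k_0\neq 0$), and the compatibility conditions between the mixed partials of $V^2$, assembled in \eqref{pel}, yield the identity $[a_1\sin(k_0x^2)-a_2\cos(k_0x^2)]\,[k_0+k_0^2(f_2F)(x^1)-kf_2(x^1)]=k_0a_3f_2(x^1)$, which forces $a_1=a_2=0$ and then $a_3=0$; only then does $\phi\equiv 0$, $\gamma$ constant follow, and the solutions collapse into family (iv) with $k=k_0^2$. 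Without this computation (or an equivalent one), the ``only if'' direction is unproven in precisely the nondegenerate subcase your dichotomy cannot reach.

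A smaller instance of the same omission occurs in your degenerate case $f_2$ constant, where $A\equiv 0$, $k\equiv 0$: the master equation only gives $V^1=\phi(x^2)x^3+\eta(x^2)$ with $\phi,\eta$ affine, so $V^1$ may a priori contain a bilinear term $x^2x^3$. The paper's case (B.1) kills this term by a further mixed-partial comparison for $V^2$ (the relation $K'(x^1)=\tfrac{2a_1c_2}{k_3f_1(x^1)}x^3$ forces $a_1=0$); your phrase ``integrating the decoupled linear relations'' glosses over this step, which is needed to obtain the purely affine family (ii). Everything else in your sketch --- the derivation of $\partial^2V^1/\partial(x^2)^2+kV^1=0$ from the second and fourth equations, the conclusion $V^1\equiv 0$ when $k$ is nonconstant leading to (i), the three constant-$k$ branches (iii)--(v), and the recovery of $V^2$ via the first-order ODE in $x^1$ --- tracks the paper's argument faithfully.
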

\begin{proof}
In this case, we have
\begin{equation}\label{s2d0}
\left\{
    \begin{aligned}
&\frac{\displaystyle \partial V^1}{\displaystyle \partial x^1}=0\\
&f_2\frac{\displaystyle \partial V^2}{\displaystyle \partial x^2}-f_1\frac{\displaystyle f_2'}{\displaystyle f_2}V^1=0\\
&\frac{\displaystyle \partial V^3}{\displaystyle \partial x^3}=0\\
&f_1\frac{\displaystyle \partial V^2}{\displaystyle \partial x^1}+f_2\frac{\displaystyle \partial V^1}{\displaystyle \partial x^2}+f_1\frac{\displaystyle f_2'}{\displaystyle f_2}V^2=0\\
&f_2\frac{\displaystyle \partial V^3}{\displaystyle \partial x^2}+k_3\frac{\displaystyle \partial V^2}{\displaystyle \partial x^3}=0\\
&k_3\frac{\displaystyle \partial V^1}{\displaystyle \partial x^3}+f_1\frac{\displaystyle \partial V^3}{\displaystyle \partial x^1}=0
    \end{aligned}
  \right. \ .
\end{equation}
From the first and the third equations of (\ref{s2d0}), we get that
\begin{equation*}
V^1=V^1(x^2,x^3), \ \ V^3=V^3(x^1,x^2),
\end{equation*}
thus, from the last equation, we deduce that
\begin{equation*}
\left\{
    \begin{aligned}
&\frac{\displaystyle \partial V^1}{\displaystyle \partial x^3}=-\frac{\displaystyle 1}{\displaystyle k_3}F_{2}\\
&\frac{\displaystyle \partial V^3}{\displaystyle \partial x^1}=\frac{\displaystyle 1}{\displaystyle f_1}F_{2}
   \end{aligned}
  \right. \ ,
\end{equation*}
where $F_{2}=F_{2}(x^2)$, which, by integration, give
\begin{equation}\label{s1bc10}
\left\{
    \begin{aligned}
&V^1(x^2,x^3)=-\frac{\displaystyle x^3}{\displaystyle k_3}F_{2}(x^2)+G_1(x^2)\\
&V^3(x^1,x^2)=F_{2}(x^2)F(x^1)+G_3(x^2)
\end{aligned}
  \right. \ ,
\end{equation}
where $F'=\displaystyle\frac{1}{f_1}$, $G_1=G_1(x^2)$, $G_3=G_3(x^2)$. Then
\begin{equation}\label{s1bd10}
f_1(x^1)\frac{\displaystyle \partial V^2}{\displaystyle \partial x^1}(x^1,x^2,x^3)=-f_2(x^1)\left[-\frac{\displaystyle x^3}{\displaystyle k_3}F'_{2}(x^2)+G'_1(x^2)\right]-\left(f_1\frac{\displaystyle f_2'}{\displaystyle f_2}\right)(x^1)V^2(x^1,x^2,x^3),
\end{equation}
\begin{equation}\label{s1bd11}
k_3\frac{\displaystyle \partial V^2}{\displaystyle \partial x^3}(x^1,x^2,x^3)=-f_2(x^1)\left[F(x^1)F'_{2}(x^2)+G'_3(x^2)\right].
\end{equation}
By differentiating the above equations with respect to $x^2$, we get
\begin{equation}\label{s1be10}
f_1(x^1)\frac{\displaystyle \partial^2 V^2}{\displaystyle \partial x^1\partial x^2}(x^1,x^2,x^3)=-f_2(x^1)\left[-\frac{\displaystyle x^3}{\displaystyle k_3}F''_{2}(x^2)+G''_1(x^2)\right]-\left(f_1\frac{\displaystyle f_2'}{\displaystyle f_2}\right)(x^1)\frac{\displaystyle \partial V^2}{\displaystyle \partial x^2}(x^1,x^2,x^3),
\end{equation}
\begin{equation}\label{s1be11}
k_3\frac{\displaystyle \partial^2 V^2}{\displaystyle \partial x^3\partial x^2}(x^1,x^2,x^3)=-f_2(x^1)\left[F(x^1)F''_{2}(x^2)+G''_3(x^2)\right].
\end{equation}
Replacing $V^1$ in the second equation of (\ref{s2d0}), we find
\begin{equation}\label{ada6}
\frac{\displaystyle \partial V^2}{\displaystyle \partial x^2}(x^1,x^2,x^3)=\left(f_1\frac{\displaystyle f_2'}{\displaystyle f_2^2}\right)(x^1)\left[-\frac{\displaystyle x^3}{\displaystyle k_3}F_{2}(x^2)+G_1(x^2)\right],
\end{equation}
which, replaced in (\ref{s1be10}) and (\ref{s1be11}), gives
\begin{equation}\label{s1be103}
\left(\frac{\displaystyle f_1}{\displaystyle f_2}\left(f_1\frac{\displaystyle f_2'}{\displaystyle f_2^2}\right)'+\left(f_1\frac{\displaystyle f_2'}{\displaystyle f_2^2}\right)^2\right)(x^1)\left[-\frac{\displaystyle x^3}{\displaystyle k_3}F_{2}(x^2)+G_1(x^2)\right]=-\left[-\frac{\displaystyle x^3}{\displaystyle k_3}F''_{2}(x^2)+G''_1(x^2)\right]
\end{equation}
and
\begin{equation}\label{s1be113}
\left(f_1\frac{\displaystyle f_2'}{\displaystyle f_2^3}\right)(x^1)F_{2}(x^2)=F(x^1)F''_{2}(x^2)+G''_3(x^2).
\end{equation}
Let us denote $h=\frac{\displaystyle f_1}{\displaystyle f_2}\left(f_1\frac{\displaystyle f_2'}{\displaystyle f_2^2}\right)'+\left(f_1\frac{\displaystyle f_2'}{\displaystyle f_2^2}\right)^2$.
By differentiating (\ref{s1be103}) with respect to $x^3$ and $x^1$, we successively infer
\begin{equation}\label{s1be104}
h(x^1)F_{2}(x^2)=-F''_{2}(x^2)
\end{equation}
and
\begin{equation*}
h'(x^1)F_{2}(x^2)=0,
\end{equation*}
which is equivalent to
\begin{equation}\label{ba}
h=k_1\in \mathbb R \ \ \textrm{or} \ \ F_2=0.
\end{equation}
Let us denote $l=f_1\left(f_1\frac{\displaystyle f_2'}{\displaystyle f_2^3}\right)'$.
Two times differentiating (\ref{s1be113}) with respect to $x^1$, we get
\begin{equation*}
l(x^1)F_{2}(x^2)=F''_{2}(x^2)
\end{equation*}
and
\begin{equation*}
l'(x^1)F_{2}(x^2)=0,
\end{equation*}
which is equivalent to
\begin{equation}\label{ca}
l=k_2\in \mathbb R \ \ \textrm{or} \ \ F_2=0.
\end{equation}
From (\ref{ba}) and (\ref{ca}), we conclude that
\begin{equation*}
F_2=0 \ \ \textrm{or} \ \
\left\{
    \begin{aligned}
&\frac{\displaystyle f_1}{\displaystyle f_2}\left(f_1\frac{\displaystyle f_2'}{\displaystyle f_2^2}\right)'+\left(f_1\frac{\displaystyle f_2'}{\displaystyle f_2^2}\right)^2=k_1\in \mathbb R\\
&f_1\left(f_1\frac{\displaystyle f_2'}{\displaystyle f_2^3}\right)'=k_2\in \mathbb R
\end{aligned}
  \right. \ .
\end{equation*}

(A) Let $F_2=0$. Then (\ref{s1bc10}) implies that
\begin{equation}\label{frou}
\left\{
    \begin{aligned}
&V^1(x^2)=G_1(x^2)\\
&V^3(x^2)=G_3(x^2)
\end{aligned}
  \right. \ .
\end{equation}
Also:
\begin{equation*}
\left\{
    \begin{aligned}
&\frac{\displaystyle \partial V^2}{\displaystyle \partial x^1}(x^1,x^2,x^3)=-\left(\frac{\displaystyle f_2}{\displaystyle f_1}\right)(x^1)G_1'(x^2)-\left(\frac{\displaystyle f_2'}{\displaystyle f_2}\right)(x^1)V^2(x^1,x^2,x^3)\\
&\frac{\displaystyle \partial V^2}{\displaystyle \partial x^2}(x^1,x^2,x^3)=\left(f_1\frac{\displaystyle f_2'}{\displaystyle f_2^2}\right)(x^1)G_1(x^2)\\
&\frac{\displaystyle \partial V^2}{\displaystyle \partial x^3}(x^1,x^2,x^3)=-\frac{\displaystyle f_2(x^1)}{\displaystyle k_3}G_3'(x^2)
\end{aligned}
  \right. \ .
\end{equation*}
By integrating the last equation, we get
\begin{equation}\label{frau}
V^2(x^1,x^2,x^3)=-\frac{\displaystyle f_2(x^1)}{\displaystyle k_3}G_3'(x^2)x^3+H(x^1,x^2),
\end{equation}
where $H=H(x^1,x^2)$, which, by differentiating with respect to $x^1$ and $x^2$ respectively, and considering the first two equations of the previous system, give
\begin{equation}\label{pd}
-2\frac{\displaystyle f_2'(x^1)}{\displaystyle k_3}G_3'(x^2)x^3=-\left(\displaystyle\frac{f_2}{f_1}\right)(x^1)G_1'(x^2)-\left(\displaystyle\frac{f_2'}{f_2}\right)(x^1)H(x^1,x^2)-
\displaystyle\frac{\partial H}{\partial x^1}(x^1,x^2)
\end{equation}
and
\begin{equation}\label{pe}
-\frac{\displaystyle f_2(x^1)}{\displaystyle k_3}G_3''(x^2)x^3=\left(f_1\displaystyle\frac{f_2'}{f_2^2}\right)(x^1)G_1(x^2)-
\displaystyle\frac{\partial H}{\partial x^2}(x^1,x^2).
\end{equation}
By differentiating (\ref{pd}) and (\ref{pe}) with respect to $x^3$, we find
\begin{equation*}
f_2'(x^1)G_3'(x^2)=0
\end{equation*}
and
\begin{equation*}
G_3''=0,
\end{equation*}
which is equivalent to
\begin{equation}\label{pe1}
f_2=c_2\in \mathbb R\setminus\{0\} \ \ \textrm{or} \ \ G_3=c_3\in \mathbb R
\end{equation}
and
\begin{equation}\label{pe2}
G_3(x^2)=a_1x^2+b_1, \ \ a_1,b_1\in \mathbb R.
\end{equation}
From (\ref{pe1}) and (\ref{pe2}), we conclude that
\begin{equation*}
\left\{
    \begin{aligned}
&f_2=c_2\in \mathbb R\setminus\{0\}\\
&G_3(x^2)=a_1x^2+b_1, \ \ a_1, b_1\in \mathbb R
\end{aligned}
  \right.
 \ \ \textrm{or} \ \
G_3=c_3\in \mathbb R.
\end{equation*}

\hspace{0.5cm} (A.1) Let \begin{equation*}
\left\{
    \begin{aligned}
&f_2=c_2\in \mathbb R\setminus\{0\}\\
&G_3(x^2)=a_1x^2+b_1, \ \ a_1, b_1\in \mathbb R
\end{aligned}
  \right. \ .
\end{equation*}
Then
\begin{equation*}
\left\{
    \begin{aligned}
&V^1(x^2)=G_1(x^2)\\
&V^2(x^1,x^2,x^3)=-\displaystyle\frac{a_1c_2}{k_3}x^3+H(x^1,x^2)\\
&V^3(x^2)=a_1x^2+b_1
\end{aligned}
  \right. \ .
\end{equation*}
From (\ref{pd}) and (\ref{pe}), we get
\begin{equation*}
\left\{
    \begin{aligned}
&\displaystyle\frac{\partial H}{\partial x^1}(x^1,x^2)=-\displaystyle\frac{c_2}{f_1(x^1)}G_1'(x^2)\\
&\displaystyle\frac{\partial H}{\partial x^2}(x^1,x^2)=0
\end{aligned}
  \right. \ .
\end{equation*}
From the second equation we deduce that $H=H(x^1)$, and from the first one, we deduce that
\begin{equation*}
-\frac{(f_1H')(x^1)}{c_2}=G_1'(x^2)
\end{equation*}
must be constant, let's say, $c_0\in \mathbb R$, and we obtain
\begin{equation*}
H(x^1)=-c_0c_2F(x^1)+c_4, \ \ c_4\in \mathbb R
\end{equation*}
and
\begin{equation*}
G_1(x^2)=c_0x^2+c_5, \ \ c_5\in \mathbb R.
\end{equation*}
Therefore,
\begin{equation*}
\left\{
    \begin{aligned}
&V^1(x^2)=c_0x^2+c_5\\
&V^2(x^1,x^3)=-\displaystyle\frac{a_1c_2}{k_3}x^3-c_0c_2F(x^1)+c_4\\
&V^3(x^2)=a_1x^2+b_1
\end{aligned}
  \right. \ .
\end{equation*}

\hspace{0.5cm} (A.2) Let $G_3=c_3\in \mathbb R$.
Then (\ref{frou}) and (\ref{frau}) imply that
\begin{equation*}
\left\{
    \begin{aligned}
&V^1(x^2)=G_1(x^2)\\
&V^2(x^1,x^2)=H(x^1,x^2)\\
&V^3=c_3
\end{aligned}
  \right. \ .
\end{equation*}
From (\ref{pd}) and (\ref{pe}), we get
\begin{equation}\label{pl1}
\left\{
    \begin{aligned}
&\displaystyle\frac{\partial H}{\partial x^1}(x^1,x^2)=-\left(\displaystyle\frac{f_2'}{f_2}\right)(x^1)H(x^1,x^2)-\left(\displaystyle\frac{f_2}{f_1}\right)(x^1)G_1'(x^2)\\
&\displaystyle\frac{\partial H}{\partial x^2}(x^1,x^2)=\left(f_1\displaystyle\frac{f_2'}{f_2^2}\right)(x^1)G_1(x^2)
\end{aligned}
  \right. \ .
\end{equation}
The first equation of the previous system is equivalent to
\begin{equation*}
\displaystyle\frac{\partial }{\partial x^1}\left(f_2H(\cdot,x^2)\right)(x^1)=-\left(\displaystyle\frac{f_2^2}{f_1}\right)(x^1)G_1'(x^2),
\end{equation*}
which, by integration, gives
\begin{equation*}
H(x^1,x^2)=\left(\frac{F_0}{f_2}\right)(x^1)G_1'(x^2)+\displaystyle\frac{1}{f_2(x^1)}K(x^2),
\end{equation*}
where $F_0'=-\displaystyle\frac{f_2^2}{f_1}$ and $K=K(x^2)$. Differentiating it with respect to $x^2$ and taking into account the second equation of (\ref{pl1}), we find
\begin{equation*}
F_0(x^1)G_1''(x^2)+K'(x^2)=\left(f_1\displaystyle\frac{f_2'}{f_2}\right)(x^1)G_1(x^2),
\end{equation*}
which, by differentiating with respect to $x^1$, implies
\begin{equation*}
G_1''(x^2)=-\left(\displaystyle\frac{f_1}{f_2^2}\right)(x^1)\left(f_1\displaystyle\frac{f_2'}{f_2}\right)'(x^1)G_1(x^2)=-h(x^1)G_1(x^2),
\end{equation*}
where
\begin{equation*}
h=\frac{\displaystyle f_1}{\displaystyle f_2}\left(f_1\frac{\displaystyle f_2'}{\displaystyle f_2^2}\right)'+\left(f_1\frac{\displaystyle f_2'}{\displaystyle f_2^2}\right)^2=\left(\frac{f_1}{f_2}\right)^2\left[\frac{f_1'}{f_1}\cdot\frac{f_2'}{f_2}+\left(\frac{f_2'}{f_2}\right)'\right],
\end{equation*}
which further, by differentiating with respect to $x^1$, implies
\begin{equation*}
h'(x^1)G_1(x^2)=0,
\end{equation*}
which is equivalent to
\begin{equation*}
h=k_1\in \mathbb R \ \ \textrm{or} \ \ G_1=0.
\end{equation*}

\hspace{1cm} (A.2.1) Let $h=k_1\in \mathbb R$.
We have the following cases:

\hspace{1.5cm} (A.2.1.1) $k_1=0$; in this case, $G_1(x^2)=c_1x^2+c_2$, $c_1,c_2\in \mathbb R$;

\hspace{1.5cm} (A.2.1.2) $k_1> 0$; in this case, $G_1(x^2)=c_1\cos({\sqrt{k_1}x^2})+c_2\sin({\sqrt{k_1}x^2})$, $c_1,c_2\in \nolinebreak \mathbb R$;

\hspace{1.5cm} (A.2.1.3) $k_1< 0$; in this case, $G_1(x^2)=c_1e^{\sqrt{-k_1}x^2}+c_2e^{-\sqrt{-k_1}x^2}$, $c_1,c_2\in \mathbb R$.\\
By integrating the second equation of (\ref{pl1}), we get
\begin{equation*}
H(x^1,x^2)=\left(f_1\frac{f_2'}{f_2^2}\right)(x^1)G_0(x^2)+L(x^1),
\end{equation*}
where $G_0'=G_1$ and $L=L(x^1)$. Differentiating it with respect to $x^1$ and taking into account the first equation of (\ref{pl1}), we find
\begin{equation*}
\left(f_1\frac{f_2'}{f_2^2}\right)'(x^1)G_0(x^2)+L'(x^1)=
-\left(\displaystyle\frac{f_2'}{f_2}\right)(x^1)\left[\left(f_1\frac{f_2'}{f_2^2}\right)(x^1)G_0(x^2)+L(x^1)\right]-
$$$$-\left(\displaystyle\frac{f_2}{f_1}\right)(x^1)G_0''(x^2),
\end{equation*}
which is equivalent to
\begin{equation*}
G_0''(x^2)+k_1G_0(x^2)=
-\left(\displaystyle\frac{f_1}{f_2^2}(f_2L)'\right)(x^1),
\end{equation*}
which must be a constant, let's say, $k\in \mathbb R$, and we obtain
\begin{equation*}
L(x^1)=\displaystyle \frac{kF_0(x^1)+k_0}{f_2(x^1)},
\end{equation*}
where $k_0\in \mathbb R$, and
\begin{equation*}
G_0''(x^2)+k_1G_0(x^2)=k.
\end{equation*}
Therefore,

\hspace{1.5cm} (A.2.1.1) if $k_1=0$:
\begin{equation*}
\left\{
    \begin{aligned}
&G_1(x^2)=c_1x^2+c_2, \ \ c_1,c_2\in \mathbb R\\
&G_0(x^2)=\displaystyle\frac{c_1}{2}(x^2)^2+c_2x^2+c_3, \ \ c_3\in \mathbb R\\
&k=c_1
\end{aligned}
  \right. \ .
\end{equation*}
In this case,
\begin{equation*}
\left\{
    \begin{aligned}
&V^1(x^2)=c_1x^2+c_2\\
&V^2(x^1,x^2)=\left(f_1\frac{f_2'}{f_2^2}\right)(x^1)\left[\displaystyle\frac{c_1}{2}(x^2)^2+c_2x^2+c_3\right]+\displaystyle \frac{c_1F_0(x^1)+k_0}{f_2(x^1)}\\
&V^3=c_3
\end{aligned}
  \right. \ ,
\end{equation*}
where $F_0'=-\displaystyle\frac{f_2^2}{f_1}$;

\hspace{1.5cm} (A.2.1.2) if $k_1> 0$:
\begin{equation*}
\left\{
    \begin{aligned}
&G_1(x^2)=c_1\cos({\sqrt{k_1}x^2})+c_2\sin({\sqrt{k_1}x^2}), \ \ c_1,c_2\in \mathbb R\\
&G_0(x^2)=\displaystyle\frac{1}{\sqrt{k_1}}[c_1\sin({\sqrt{k_1}x^2})-c_2\cos({\sqrt{k_1}x^2})]+c_3, \ \ c_3\in \mathbb R\\
&k=c_3k_1
\end{aligned}
  \right. \ .
\end{equation*}
In this case,
\begin{equation*}
\left\{
    \begin{aligned}
&V^1(x^2)=c_1\cos({\sqrt{k_1}x^2})+c_2\sin({\sqrt{k_1}x^2})\\
&V^2(x^1,x^2)=\left(f_1\frac{f_2'}{f_2^2}\right)(x^1)\left[\displaystyle\frac{1}{\sqrt{k_1}}[c_1\sin({\sqrt{k_1}x^2})-c_2\cos({\sqrt{k_1}x^2})]+c_3\right]\\
&\hspace{72pt}+\displaystyle \frac{c_3k_1F_0(x^1)+k_0}{f_2(x^1)}\\
&V^3=c_3
\end{aligned}
  \right. \ ,
\end{equation*}
where $F_0'=-\displaystyle\frac{f_2^2}{f_1}$;

\hspace{1.5cm} (A.2.1.3) if $k_1< 0$:
\begin{equation*}
\left\{
    \begin{aligned}
&G_1(x^2)=c_1e^{\sqrt{-k_1}x^2}+c_2e^{-\sqrt{-k_1}x^2}, \ \ c_1,c_2\in \mathbb R\\
&G_0(x^2)=\displaystyle\frac{1}{\sqrt{-k_1}}\left(c_1e^{\sqrt{-k_1}x^2}-c_2e^{-\sqrt{-k_1}x^2}\right)+c_3, \ \ c_3\in \mathbb R\\
&k=c_3k_1
\end{aligned}
  \right. \ .
\end{equation*}
In this case,
\begin{equation*}
\left\{
    \begin{aligned}
&V^1(x^2)=c_1e^{\sqrt{-k_1}x^2}+c_2e^{-\sqrt{-k_1}x^2}\\
&V^2(x^1,x^2)=\left(f_1\frac{f_2'}{f_2^2}\right)(x^1)\left[\displaystyle\frac{1}{\sqrt{-k_1}}\left(c_1e^{\sqrt{-k_1}x^2}-c_2e^{-\sqrt{-k_1}x^2}\right)+c_3\right]+\displaystyle \frac{c_3k_1F_0(x^1)+k_0}{f_2(x^1)}\\
&V^3=c_3
\end{aligned}
  \right. \ ,
\end{equation*}
where $F_0'=-\displaystyle\frac{f_2^2}{f_1}$.

\hspace{1cm} (A.2.2) Let $G_1=0$. Then
\begin{equation*}
\left\{
    \begin{aligned}
&V^1=0\\
&\displaystyle\frac{\partial H}{\partial x^1}(x^1,x^2)=-\left(\displaystyle\frac{f_2'}{f_2}\right)(x^1)H(x^1,x^2)\\
&\displaystyle\frac{\partial H}{\partial x^2}(x^1,x^2)=0
\end{aligned}
  \right. \ .
\end{equation*}
From the last equation we deduce that $H=H(x^1)$, and from the second one, that
\begin{equation*}
(f_2H)'=0,
\end{equation*}
that is,
\begin{equation*}
H(x^1)=\frac{c_0}{f_2(x^1)},
\end{equation*}
where $c_0\in \mathbb R$.
Therefore,
\begin{equation*}
\left\{
    \begin{aligned}
&V^1=0\\
&V^2(x^1)=\displaystyle\frac{c_0}{f_2(x^1)}\\
&V^3=c_3
\end{aligned}
  \right. \ .
\end{equation*}

(B) Let
\begin{equation*}
\left\{
    \begin{aligned}
&\frac{\displaystyle f_1}{\displaystyle f_2}\left(f_1\frac{\displaystyle f_2'}{\displaystyle f_2^2}\right)'+\left(f_1\frac{\displaystyle f_2'}{\displaystyle f_2^2}\right)^2=k_1\in \mathbb R\\
&f_1\left(f_1\frac{\displaystyle f_2'}{\displaystyle f_2^3}\right)'=k_2\in \mathbb R
\end{aligned}
  \right. \ .
\end{equation*}
By straightforward computations, we obtain
\begin{equation*}
\left(f_1\frac{\displaystyle f_2'}{\displaystyle f_2^2}\right)^2=\frac{k_1-k_2}{2};\end{equation*}
therefore, $f_1\frac{\displaystyle f_2'}{\displaystyle f_2^2}$ must be constant, let's say, $k_0\in \mathbb R$, and we get
\begin{equation*}
k_1=-k_2=k_0^2.
\end{equation*}
We have
\begin{equation*}
\frac{\displaystyle f_2'}{\displaystyle f_2^2}=\frac{\displaystyle k_0}{\displaystyle f_1},
\end{equation*}
which, by integration, gives
$f_2(x^1)=-\displaystyle \frac{1}{k_0F(x^1)-c_0}$ on an open interval $I\subset \mathbb R$, where $c_0\in \mathbb R$ such that $\frac{\displaystyle c_0}{\displaystyle k_0}\notin F(I)$.
Also, $h=k_0^2$ and from (\ref{s1be104}), we get
\begin{equation*}
k_0^2F_2(x^2)=-F_2''(x^2).
\end{equation*}
The associated characteristic equation is $y^2+k_0^2=0$, and we have the following cases.

\hspace{0.5cm} (B.1) If $k_0=0$ (equivalent to $f_2'=0$, i.e., $f_2=c_2\in \mathbb R\setminus \{0\}$), then
\begin{equation*}
F_2(x^2)=a_1x^2+a_2, \ \ a_1, a_2\in \mathbb R.
\end{equation*}
Then (\ref{s1bc10})--(\ref{s1bd11}) and (\ref{ada6}) imply that
\begin{equation*}
\left\{
    \begin{aligned}
&V^1(x^2,x^3)=-\frac{\displaystyle (a_1x^2+a_2)x^3}{\displaystyle k_3}+G_1(x^2)\\
&V^3(x^1,x^2)=(a_1x^2+a_2)F(x^1)+G_3(x^2)\\
&\frac{\displaystyle \partial V^2}{\displaystyle \partial x^1}(x^1,x^2,x^3)=-\frac{\displaystyle c_2}{\displaystyle f_1(x^1)}\left[-\frac{\displaystyle a_1x^3}{\displaystyle k_3}+G_1'(x^2)\right]\\
&\frac{\displaystyle \partial V^2}{\displaystyle \partial x^2}(x^1,x^2,x^3)=0\\
&\frac{\displaystyle \partial V^2}{\displaystyle \partial x^3}(x^1,x^2,x^3)=-\frac{\displaystyle c_2}{\displaystyle k_3}[a_1F(x^1)+G_3'(x^2)]
\end{aligned}
  \right. \ .
\end{equation*}
It follows that $V^2=V^2(x^1,x^3)$, and since $c_2\neq 0$, from the third and the last equations, we get
\begin{equation*}
G_1=d_1\in \mathbb R, \ \ G_3=d_3\in \mathbb R,
\end{equation*}
and the previous system becomes
\begin{equation*}
\left\{
    \begin{aligned}
&V^1(x^2,x^3)=-\frac{\displaystyle (a_1x^2+a_2)x^3}{\displaystyle k_3}+d_1\\
&V^3(x^1,x^2)=(a_1x^2+a_2)F(x^1)+d_3\\
&\frac{\displaystyle \partial V^2}{\displaystyle \partial x^1}(x^1,x^3)=\frac{\displaystyle a_1c_2}{\displaystyle k_3f_1(x^1)}x^3\\
&\frac{\displaystyle \partial V^2}{\displaystyle \partial x^3}(x^1,x^3)=-\frac{\displaystyle a_1c_2}{\displaystyle k_3}F(x^1)
\end{aligned}
  \right. \ .
\end{equation*}
By integrating the last equation, we get
\begin{equation*}
V^2(x^1,x^3)=-\frac{\displaystyle a_1c_2}{\displaystyle k_3}F(x^1)x^3+K(x^1),\end{equation*}
where $K=K(x^1)$. By differentiating this relation with respect to $x^1$ and using the third equation of the previous system, we find
\begin{equation*}
K'(x^1)=\frac{\displaystyle 2a_1c_2}{\displaystyle k_3f_1(x^1)}x^3;
\end{equation*}
therefore, $a_1=0$, hence,
\begin{equation*}
\left\{
    \begin{aligned}
&V^1(x^3)=-\frac{\displaystyle a_2x^3}{\displaystyle k_3}+d_1\\
&V^3(x^1)=a_2F(x^1)+d_3\\
&\frac{\displaystyle \partial V^2}{\displaystyle \partial x^1}(x^1,x^3)=0\\
&\frac{\displaystyle \partial V^2}{\displaystyle \partial x^3}(x^1,x^3)=0
\end{aligned}
  \right. \ .
\end{equation*}
Thus, $V^2$ must be constant.

\hspace{0.5cm} (B.2) If $k_0\neq 0$, then
\begin{equation*}
F_2(x^2)=a_1\cos({k_0x^2})+a_2\sin({k_0x^2}), \ \ a_1, a_2\in \mathbb R.
\end{equation*}
From (\ref{s1be103}) and (\ref{s1be113}), we get
 \begin{equation*}
\left\{
    \begin{aligned}
&k_0^2G_1(x^2)=-G_1''(x^2)\\
&G_3''(x^2)=\left[\frac{\displaystyle k_0}{\displaystyle f_2(x^1)}+k_0^2F(x^1)\right]F_2(x^2),
\end{aligned}
  \right.
\end{equation*}
which is equivalent to
\begin{equation}\label{aaa}
\left\{
    \begin{aligned}
&G_1(x^2)=b_1\cos({k_0x^2})+b_2\sin({k_0x^2}), \ \ b_1, b_2\in \mathbb R\\
&F_2=0\\
&G_3(x^2)=a_3x^2+a_4, \ \ a_3, a_4\in \mathbb R
\end{aligned}
  \right.
\end{equation}
or
\begin{equation}\label{laka}
\left\{
    \begin{aligned}
&G_1(x^2)=b_1\cos({k_0x^2})+b_2\sin({k_0x^2}), \ \ b_1, b_2\in \mathbb R\\
&\frac{\displaystyle k_0}{\displaystyle f_2(x^1)}+k_0^2F(x^1)=k\in \mathbb R\\
&G_3''(x^2)=kF_2(x^2)
\end{aligned}
  \right. \ .
\end{equation}

\hspace{1cm} (B.2.1)
If (\ref{aaa}) holds true, then (\ref{s1bd10}), (\ref{ada6}), and (\ref{s1bd11}) imply:
\begin{equation}\label{ada12}
\left\{
    \begin{aligned}
&\frac{\displaystyle \partial V^2}{\displaystyle \partial x^1}(x^1,x^2,x^3)=-\left(\frac{\displaystyle f_2}{\displaystyle f_1}\right)(x^1)G_1'(x^2)-\left(\frac{\displaystyle f_2'}{\displaystyle f_2}\right)(x^1)V^2(x^1,x^2,x^3)\\
&\frac{\displaystyle \partial V^2}{\displaystyle \partial x^2}(x^1,x^2,x^3)=k_0G_1(x^2)\\
&\frac{\displaystyle \partial V^2}{\displaystyle \partial x^3}(x^1,x^2,x^3)=-\frac{\displaystyle f_2(x^1)}{\displaystyle k_3}G_3'(x^2)
\end{aligned}
  \right. \ .
\end{equation}
By integrating the third equation of (\ref{ada12}), we get
\begin{equation}\label{ada1a}
V^2(x^1,x^2,x^3)=-\displaystyle \frac{a_3}{k_3}f_2(x^1)x^3+H(x^1,x^2),
\end{equation}
where $H=H(x^1,x^2)$.
Differentiating (\ref{ada1a}) with respect to $x^2$ and replacing it in the second equation of (\ref{ada12}), we find
\begin{equation}\label{ada2a}
\displaystyle \frac{\partial H}{\partial x^2}(x^1,x^2)=k_0G_1(x^2),
\end{equation}
and by differentiating (\ref{ada1a}) with respect to $x^1$ and replacing it in the first equation of (\ref{ada12}), we find
\begin{equation}\label{dar1}
   \begin{aligned}
-\displaystyle \frac{f_2'(x^1)}{k_3}G_3'(x^2)x^3+\displaystyle \frac{\partial H}{\partial x^1}(x^1,x^2)&=-\left(\frac{\displaystyle f_2}{\displaystyle f_1}\right)(x^1)G_1'(x^2)+\frac{\displaystyle f_2'(x^1)}{\displaystyle k_3}G_3'(x^2)x^3\\
&\hspace{24pt}-\left(\frac{\displaystyle f_2'}{\displaystyle f_2}\right)(x^1)H(x^1,x^2)
\end{aligned}
  \ .
\end{equation}
From (\ref{dar1}) and (\ref{ada2a}), we have
\begin{equation}\label{ad1a}
\left\{
    \begin{aligned}
&\displaystyle \frac{\partial H}{\partial x^1}(x^1,x^2)=\frac{\displaystyle 2}{\displaystyle k_3}f_2'(x^1)a_3x^3-\left(\frac{\displaystyle f_2}{\displaystyle f_1}\right)(x^1)G_1'(x^2)-\left(\frac{\displaystyle f_2'}{\displaystyle f_2}\right)(x^1)H(x^1,x^2)\\
&\displaystyle \frac{\partial H}{\partial x^2}(x^1,x^2)=k_0G_1(x^2)
\end{aligned}
  \right. \ .
\end{equation}
From the first equation of (\ref{ad1a}), we deduce that
$a_3=0$ since $f_2'\neq 0$ at every point.
By integrating the last equation of the same system, we get
\begin{equation*}
H(x^1,x^2)=b_1\sin({k_0x^2})-b_2\cos({k_0x^2})+K(x^1),
\end{equation*}
where $K=K(x^1)$, which, by differentiating with respect to $x^1$ and replaced in the first one, gives
\begin{align*}
(f_2K)'=0,
\end{align*}
and we obtain
\begin{equation*}
K(x^1)=\frac{c_9}{f_2(x^1)}, \ \ c_9\in \mathbb R.
\end{equation*}
In this case,
\begin{equation*}
\left\{
    \begin{aligned}
&V^1(x^2)=b_1\cos({k_0x^2})+b_2\sin({k_0x^2})\\
&V^2(x^1, x^2)=b_1\sin({k_0x^2})-b_2\cos({k_0x^2})+\frac{c_9}{f_2(x^1)}\\
&V^3=a_4
\end{aligned}
  \right. \ .
\end{equation*}

\hspace{1cm} (B.2.2) If (\ref{laka}) holds true, then from the last two equations of (\ref{laka}), we get
$f_2(x^1)=\frac{\displaystyle k_0}{\displaystyle k-k_0^2F(x^1)}$ on an open interval $I\subset \mathbb R$
such that $\frac{\displaystyle k}{\displaystyle k_0^2}\notin F(I)$, and
\begin{equation*}
G_3(x^2)=-\frac{k}{k_0^2}[a_1\cos({k_0x^2})+a_2\sin({k_0x^2})]+a_3x^2+a_4, \ \ a_3, a_4\in \mathbb R.
\end{equation*}
Also, (\ref{s1bd10}), (\ref{ada6}), and (\ref{s1bd11}) imply:
\begin{equation}\label{ada}
\left\{
    \begin{aligned}
&\frac{\displaystyle \partial V^2}{\displaystyle \partial x^1}(x^1,x^2,x^3)=-\left(\frac{\displaystyle f_2}{\displaystyle f_1}\right)(x^1)\left[-\frac{\displaystyle x^3}{\displaystyle k_3}F_2'(x^2)+G_1'(x^2)\right]-\left(\frac{\displaystyle f_2'}{\displaystyle f_2}\right)(x^1)V^2(x^1,x^2,x^3)\\
&\frac{\displaystyle \partial V^2}{\displaystyle \partial x^2}(x^1,x^2,x^3)=k_0\left[-\frac{\displaystyle x^3}{\displaystyle k_3}F_2(x^2)+G_1(x^2)\right]\\
&\frac{\displaystyle \partial V^2}{\displaystyle \partial x^3}(x^1,x^2,x^3)=-\frac{\displaystyle f_2(x^1)}{\displaystyle k_3}\left[F(x^1)F_2'(x^2)+G_3'(x^2)\right]
\end{aligned}
  \right. \ .
\end{equation}
By integrating the third equation of (\ref{ada}), we get
\begin{equation}\label{ada1}
V^2(x^1,x^2,x^3)=-\displaystyle \frac{f_2(x^1)}{k_3}\left[F(x^1)F_2'(x^2)+G_3'(x^2)\right]x^3+H(x^1,x^2),
\end{equation}
where $H=H(x^1,x^2)$.
Differentiating (\ref{ada1}) with respect to $x^2$ and replacing it in the second equation of (\ref{ada}), we find
\begin{equation}\label{ada2}
-\displaystyle \frac{f_2(x^1)}{k_3}\left[F(x^1)F_2''(x^2)+G_3''(x^2)\right]x^3+\displaystyle \frac{\partial H}{\partial x^2}(x^1,x^2)=k_0\left[-\frac{\displaystyle x^3}{\displaystyle k_3}F_2(x^2)+G_1(x^2)\right],
\end{equation}
and by differentiating (\ref{ada1}) with respect to $x^1$ and replacing it in the first equation of (\ref{ada}), we find
\begin{equation}\label{dar}
   \begin{aligned}
&-\displaystyle \frac{f_2'(x^1)}{k_3}\left[F(x^1)F_2'(x^2)+G_3'(x^2)\right]x^3-\displaystyle \frac{f_2(x^1)}{k_3f_1(x^1)}F_2'(x^2)x^3+\displaystyle \frac{\partial H}{\partial x^1}(x^1,x^2)\\
&\hspace{12pt}=\frac{\displaystyle f_2(x^1)}{\displaystyle k_3f_1(x^1)}F_2'(x^2)x^3-\left(\frac{\displaystyle f_2}{\displaystyle f_1}\right)(x^1)G_1'(x^2)+\frac{\displaystyle f_2'(x^1)}{\displaystyle k_3}\left[F(x^1)F_2'(x^2)+G_3'(x^2)\right]x^3\\
&\hspace{24pt}-\left(\frac{\displaystyle f_2'}{\displaystyle f_2}\right)(x^1)H(x^1,x^2)
\end{aligned}
  \ .
\end{equation}
From (\ref{dar}) and (\ref{ada2}), we have
\begin{equation*}
\left\{
    \begin{aligned}
&\displaystyle \frac{\partial H}{\partial x^1}(x^1,x^2)=\frac{\displaystyle 2}{\displaystyle k_3}\left[\left(\frac{\displaystyle f_2}{\displaystyle f_1}\right)(x^1)F_2'(x^2)+f_2'(x^1)\left[F(x^1)F_2'(x^2)+G_3'(x^2)\right]\right]x^3\\
&\hspace{72pt}-\left(\frac{\displaystyle f_2}{\displaystyle f_1}\right)(x^1)G_1'(x^2)-\left(\frac{\displaystyle f_2'}{\displaystyle f_2}\right)(x^1)H(x^1,x^2)\\
&\displaystyle \frac{\partial H}{\partial x^2}(x^1,x^2)=\displaystyle \frac{f_2(x^1)}{k_3}\left[F(x^1)F_2''(x^2)+G_3''(x^2)\right]x^3+k_0\left[-\frac{\displaystyle x^3}{\displaystyle k_3}F_2(x^2)+G_1(x^2)\right]
\end{aligned}
  \right. \ ,
\end{equation*}
and we deduce that
\begin{equation}\label{pel}
\left\{
    \begin{aligned}
&F_2'(x^2)+k_0f_2(x^1)\left[F(x^1)F_2'(x^2)+G_3'(x^2)\right]=0\\
&\displaystyle \frac{\partial H}{\partial x^1}(x^1,x^2)=
-\left(\frac{\displaystyle f_2}{\displaystyle f_1}\right)(x^1)G_1'(x^2)-\left(\frac{\displaystyle f_2'}{\displaystyle f_2}\right)(x^1)H(x^1,x^2)\\
&f_2(x^1)\left[F(x^1)F_2''(x^2)+G_3''(x^2)\right]-k_0F_2(x^2)=0\\
&\displaystyle \frac{\partial H}{\partial x^2}(x^1,x^2)=k_0G_1(x^2)
\end{aligned}
  \right. \ .
\end{equation}
Taking into account that
\begin{equation*}
\left\{
    \begin{aligned}
&F_2(x^2)=a_1\cos({k_0x^2})+a_2\sin({k_0x^2})\\
&G_1(x^2)=b_1\cos({k_0x^2})+b_2\sin({k_0x^2})\\
&G_3(x^2)=-\frac{k}{k_0^2}[a_1\cos({k_0x^2})+a_2\sin({k_0x^2})]+a_3x^2+a_4
\end{aligned}
  \right. \ ,
\end{equation*}
from the first equation of (\ref{pel}), we obtain
\begin{equation*}
[a_1\sin({k_0x^2})-a_2\cos({k_0x^2})][k_0+k_0^2(f_2F)(x^1)-kf_2(x^1)]=k_0a_3f_2(x^1),
\end{equation*}
which implies that $a_1=a_2=0$, hence,
$a_3=0$, and
\begin{equation*}
\left\{
    \begin{aligned}
&F_2=0\\
&G_3=a_4
\end{aligned}
  \right. \ .
\end{equation*}
By integrating the last equation of (\ref{pel}), we get
\begin{equation*}
H(x^1,x^2)=b_1\sin({k_0x^2})-b_2\cos({k_0x^2})+K(x^1),
\end{equation*}
where $K=K(x^1)$, which, by differentiating with respect to $x^1$ and replaced in the second one, imply that
\begin{equation*}
(f_2K)'=0,
\end{equation*}
and we obtain
\begin{equation*}
K(x^1)=\displaystyle \frac{c_9}{f_2(x^1)}, \ c_9\in \mathbb R \ \ \textrm{and} \ \ H(x^1,x^2)=b_1\sin({k_0x^2})-b_2\cos({k_0x^2})+\displaystyle \frac{c_9}{f_2(x^1)}.
\end{equation*}
In this case,
\begin{equation*}
\left\{
    \begin{aligned}
&V^1(x^2)=b_1\cos({k_0x^2})+b_2\sin({k_0x^2})\\
&V^2(x^1,x^2)=b_1\sin({k_0x^2})-b_2\cos({k_0x^2})+\displaystyle \frac{c_9}{f_2(x^1)}\\
&V^3=a_4
\end{aligned}
  \right. \ .
\end{equation*}
Finally, let us notice that for $f_2$ constant (let's say, $k_2\in \mathbb R\setminus\{0\}$), we have obtained at (A.1) and (B.1) the expressions of the component functions $V^1$, $V^2$ and $V^3$, therefore, a linear combination of the two solutions is a solution, too:
\begin{equation*}
\left\{
    \begin{aligned}
&V^1(x^2,x^3)=a_1x^2+a_2x^3+a_3, \ \ a_1,a_2,a_3\in \mathbb R\\
&V^2(x^2,x^3)=-a_1k_2F(x^1)-c_1k_2x^3+a_4, \ \ c_1,a_4\in \mathbb R\\
&V^3(x^1,x^2)=-a_2k_3F(x^1)+c_1k_3x^2+a_5, \ \ a_5\in \mathbb R
\end{aligned}
  \right. \ .
\end{equation*}
By a direct computation, we notice that the converse implication holds true, too, i.e., since the component functions $V^1$, $V^2$ and $V^3$ of a vector field $V$ satisfy (\ref{s2d0}), then $V$ is a Killing vector field.
\end{proof}

\begin{corollary}\label{cada}
If $f_1=f_1(x^1)$, $f_2=k_2\in \mathbb R\setminus\{0\}$, $f_3=k_3\in \mathbb R\setminus\{0\}$, then $V=\sum_{k=1}^3V^kE_k$ is a Killing vector field if and only if
\begin{equation*}
\left\{
    \begin{aligned}
&V^1(x^2,x^3)=c_1x^2+c_2x^3+c_3\\
&V^2(x^1,x^3)=-c_1k_2F(x^1)-\frac{\displaystyle c_4}{\displaystyle k_3}x^3+c_5\\
&V^3(x^1,x^2)=-c_2k_3F(x^1)+\frac{\displaystyle c_4}{\displaystyle k_2}x^2+c_6
\end{aligned}
  \right. \ ,
\end{equation*}
where $F'=\displaystyle \frac{1}{f_1}$ and $c_1,c_2,c_3,c_4,c_5,c_6\in \mathbb R$.
\end{corollary}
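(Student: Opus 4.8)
The statement is the specialization of Theorem~\ref{te1} to the case in which the function $f_2=f_2(x^1)$ is itself a nonzero constant $k_2$. The plan is therefore to run through the five cases $(i)$–$(v)$ of Theorem~\ref{te1} under the extra hypothesis that $f_2$ is constant and to collect the surviving solutions. Cases $(iii)$, $(iv)$, $(v)$ are stated under the assumption that $f_2$ is \emph{nonconstant}, so they contribute nothing here; case $(i)$ gives $V^1=0$, $V^2=c_1/k_2$, $V^3=c_2$, which is the particular instance of case $(ii)$ obtained by setting to zero the coefficients of $x^2$, $x^3$ and $F$. Hence the whole Killing algebra collapses onto case $(ii)$.

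In case $(ii)$ with $f_2=k_2$ the theorem yields $V^1=c_1x^2+c_2x^3+c_3$, $V^2=-c_1k_2F(x^1)-c_4k_2x^3+c_5$, and $V^3=-c_2k_3F(x^1)+c_4k_3x^2+c_6$. The only difference from the asserted formulas is the name given to the single free constant multiplying $x^3$ in $V^2$ and $x^2$ in $V^3$: the substitution $c_4\mapsto c_4/(k_2k_3)$ turns $-c_4k_2x^3$ into $-c_4x^3/k_3$ and $c_4k_3x^2$ into $c_4x^2/k_2$, reproducing the stated expressions. Because this is a bijection of the free parameter over $\mathbb R$, the solution set is unchanged, and both implications of the corollary are inherited from Theorem~\ref{te1}.

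As an independent and more self-contained check, I would also observe that when $f_1=f_1(x^1)$, $f_2=k_2$ and $f_3=k_3$ every Lam\'e coefficient $f_{ij}$ vanishes, so the system~\eqref{s1} degenerates to $\partial V^1/\partial x^1=\partial V^2/\partial x^2=\partial V^3/\partial x^3=0$ together with the three cross equations $f_1\,\partial V^2/\partial x^1+k_2\,\partial V^1/\partial x^2=0$, $k_2\,\partial V^3/\partial x^2+k_3\,\partial V^2/\partial x^3=0$, $k_3\,\partial V^1/\partial x^3+f_1\,\partial V^3/\partial x^1=0$. The diagonal equations give $V^1=V^1(x^2,x^3)$, $V^2=V^2(x^1,x^3)$, $V^3=V^3(x^1,x^2)$; substituting these into each cross equation makes the two sides depend on disjoint pairs of coordinates, so each side must be a function of the single shared variable alone, and integrating produces the affine dependence on $F(x^1)$, $x^2$, $x^3$ displayed in the statement. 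Equivalently, in the coordinates $(F(x^1),\,x^2/k_2,\,x^3/k_3)$ the metric becomes the flat Euclidean one, whence the Killing fields are exactly the six infinitesimal Euclidean isometries (three translations, three rotations); rewriting them in the frame $\{E_1,E_2,E_3\}$ reproduces the six-parameter family above.

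There is no analytic obstacle here: the coupled conditions on $f_1,f_2$ that separate cases $(iii)$–$(v)$ of Theorem~\ref{te1} simply degenerate once $f_2$ is constant. The only point demanding care is the bookkeeping, namely verifying that cases $(iii)$–$(v)$ are genuinely vacuous, that case $(i)$ is contained in case $(ii)$, and that the reparametrization of $c_4$ is a bijection, so that no Killing field is lost or counted twice.
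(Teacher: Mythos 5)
Your proposal is correct and follows essentially the same route as the paper, whose proof of Corollary~\ref{cada} is precisely the specialization of Theorem~\ref{te1} to constant $f_2$: you correctly note that cases $(iii)$--$(v)$ are vacuous (constant $f_2$ forces $k=0$ there), that case $(i)$ sits inside case $(ii)$, and that the rescaling $c_4\mapsto c_4/(k_2k_3)$, bijective since $k_2k_3\neq 0$, reconciles the parametrizations. Your supplementary check via the flattening coordinates $\bigl(F(x^1),\,x^2/k_2,\,x^3/k_3\bigr)$, in which $E_1,E_2,E_3$ become the coordinate frame and the Killing fields are the infinitesimal Euclidean isometries, is a nice independent confirmation not present in the paper, though one should note that $F(\mathbb R)$ may be a proper interval, so one is invoking the (standard) fact that Killing fields on a connected open subset of Euclidean space are restrictions of global ones.
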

\begin{proof}
It follows from Theorem \ref{te1}.
\end{proof}

\begin{corollary}
If $f_1=k_1\in \mathbb R\setminus\{0\}$, $f_2=f_2(x^1)$, $f_3=k_3\in \mathbb R\setminus\{0\}$, then $V=\sum_{k=1}^3V^kE_k$ is a Killing vector field if and only if one of the following assertions hold:
\hspace{0.5cm}\begin{equation*}
(i) \
\left\{
    \begin{aligned}
&V^1=0\\
&V^2(x^1)=\displaystyle\frac{c_1}{f_2(x^1)}, \ \ c_1\in \mathbb R\\
&V^3=c_2, \ \ c_2\in \mathbb R
\end{aligned}
  \right. \ ;
\end{equation*}

\hspace{0.5cm}
\begin{equation*}
(ii) \
\left\{
    \begin{aligned}
&V^1(x^2,x^3)=c_1x^2+c_2x^3+c_3, \ \ c_1,c_2,c_3\in \mathbb R\\
&V^2(x^1,x^3)=-\displaystyle\frac{c_1k_2}{k_1}x^1-c_4k_2x^3+c_5, \ \ c_4,c_5\in \mathbb R\\
&V^3(x^1,x^2)=-\displaystyle\frac{c_2k_3}{k_1}x^1+c_4k_3x^2+c_6, \ \ c_6\in \mathbb R
\end{aligned}
  \right. \ ,
\end{equation*}
where $F'=\displaystyle\frac{1}{f_1}$ and $f_2=k_2$ (is constant);

\hspace{0.5cm}
\begin{equation*}
(iii) \
\left\{
    \begin{aligned}
&V^1(x^2)=c_1x^2+c_2, \ \ c_1, c_2\in \mathbb R\\
&V^2(x^1,x^2)=k_1\left(\frac{f_2'}{f_2^2}\right)(x^1)\left[\displaystyle\frac{c_1}{2}(x^2)^2+c_2x^2+c_3\right]+\displaystyle \frac{c_1F_0(x^1)+c_4}{f_2(x^1)}, \ \ c_3, c_4\in \mathbb R\\
&V^3=c_3
\end{aligned}
  \right. \ ,
\end{equation*}
where $F_0'=-\displaystyle\frac{f_2^2}{k_1}$ and $f_2(x^1)=a_1e^{a_2x^1}$, $a_1, a_2\in \mathbb R\setminus \{0\}$;

\hspace{0.5cm}
\begin{equation*}
(iv) \
\left\{
    \begin{aligned}
&V^1(x^2)=c_1\cos({\sqrt{k}x^2})+c_2\sin({\sqrt{k}x^2}), \ \ c_1, c_2\in \mathbb R\\
&V^2(x^1,x^2)=k_1\left(\frac{f_2'}{f_2^2}\right)(x^1)\left[\displaystyle\frac{1}{\sqrt{k}}[c_1\sin({\sqrt{k}x^2})-c_2\cos({\sqrt{k}x^2})]+c_3\right]\\
&\hspace{72pt}+\displaystyle \frac{c_3kF_0(x^1)+c_4}{f_2(x^1)}, \ \ c_3, c_4\in \mathbb R\\
&V^3=c_3
\end{aligned}
  \right. \ ,
\end{equation*}
with $k:=\displaystyle\frac{k_1^2}{f_2^2}\cdot\left(\displaystyle\frac{f_2'}{f_2}\right)'\in (0,+\infty)$, where $F_0'=-\displaystyle\frac{f_2^2}{k_1}$;

\hspace{0.5cm}
\begin{equation*}
(v) \
\left\{
    \begin{aligned}
&V^1(x^2)=c_1e^{\sqrt{-k}x^2}+c_2e^{-\sqrt{-k}x^2}, \ \ c_1, c_2\in \mathbb R\\
&V^2(x^1,x^2)=k_1\left(\frac{f_2'}{f_2^2}\right)(x^1)\left[\displaystyle\frac{1}{\sqrt{-k}}\left(c_1e^{\sqrt{-k}x^2}-c_2e^{-\sqrt{-k}x^2}\right)+c_3\right]\\
&\hspace{72pt}+\displaystyle \frac{c_3kF_0(x^1)+c_4}{f_2(x^1)}, \ \ c_3, c_4\in \mathbb R\\
&V^3=c_3
\end{aligned}
  \right. \ ,
\end{equation*}
with $k:=\displaystyle\frac{k_1^2}{f_2^2}\cdot\left(\displaystyle\frac{f_2'}{f_2}\right)'\in (-\infty,0)$, where $F_0'=-\displaystyle\frac{f_2^2}{k_1}$.
\end{corollary}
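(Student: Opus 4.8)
The plan is to obtain this statement as the specialization of Theorem~\ref{te1} to the case $f_1\equiv k_1\in\mathbb R\setminus\{0\}$ constant. The hypotheses $f_1=k_1$, $f_2=f_2(x^1)$, $f_3=k_3$ are precisely those of Theorem~\ref{te1} together with the extra requirement that $f_1$ be constant, so the entire list of Killing fields can be read off from the five cases of that theorem after performing the substitution $f_1=k_1$ and simplifying the auxiliary quantities.

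First I would record the simplifications that this substitution produces. Since $F'=\frac{1}{f_1}=\frac{1}{k_1}$, one may take $F(x^1)=\frac{x^1}{k_1}$, the integration constant being absorbed into the free parameters $c_5,c_6$. Since $f_1'=0$, the quantity governing cases (iii)--(v),
\begin{equation*}
h=\left(\frac{f_1}{f_2}\right)^2\left[\frac{f_1'}{f_1}\cdot\frac{f_2'}{f_2}+\left(\frac{f_2'}{f_2}\right)'\right],
\end{equation*}
collapses to $h=\frac{k_1^2}{f_2^2}\left(\frac{f_2'}{f_2}\right)'$. Likewise $f_1\frac{f_2'}{f_2^2}=k_1\frac{f_2'}{f_2^2}$ and $F_0'=-\frac{f_2^2}{f_1}=-\frac{f_2^2}{k_1}$, which are exactly the coefficients appearing in the stated component functions.

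Next I would proceed case by case. Case (i) transfers verbatim, as it involves neither $F$ nor $h$. In case (ii) one has $f_2=k_2$ constant, and $F(x^1)=\frac{x^1}{k_1}$ turns $-c_1k_2F(x^1)$ and $-c_2k_3F(x^1)$ into $-\frac{c_1k_2}{k_1}x^1$ and $-\frac{c_2k_3}{k_1}x^1$, yielding the claimed $V^2$ and $V^3$. For case (iii) the constraint $\frac{f_1'}{f_1}\cdot\frac{f_2'}{f_2}+\left(\frac{f_2'}{f_2}\right)'=0$ reduces to $\left(\frac{f_2'}{f_2}\right)'=0$; integrating twice gives $\frac{f_2'}{f_2}=a_2$ and hence $f_2(x^1)=a_1e^{a_2x^1}$, with $a_1\neq 0$ forced by $f_2$ being nowhere zero and $a_2\neq 0$ equivalent to $f_2$ being nonconstant. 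In cases (iv) and (v) the defining constant is simply $k=h=\frac{k_1^2}{f_2^2}\left(\frac{f_2'}{f_2}\right)'$, and the sign conditions $k\in(0,+\infty)$, respectively $k\in(-\infty,0)$, carry over unchanged; after replacing $f_1\frac{f_2'}{f_2^2}$ by $k_1\frac{f_2'}{f_2^2}$ and $F_0'$ by $-\frac{f_2^2}{k_1}$, the component functions coincide with those in Theorem~\ref{te1}.

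The only step requiring genuine care is case (iii), where one must solve the reduced equation $\left(\frac{f_2'}{f_2}\right)'=0$ explicitly to exhibit $f_2=a_1e^{a_2x^1}$ and verify that the nonconstancy hypothesis corresponds to $a_2\neq 0$; everything else is a matter of substituting $f_1=k_1$ and simplifying the constants $F$, $h$, $F_0$. I do not expect any essential obstacle beyond this bookkeeping, so the proof amounts to the single sentence that the result follows from Theorem~\ref{te1}.
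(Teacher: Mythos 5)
Your proposal is correct and takes exactly the paper's route: the paper's own proof is the single sentence ``It follows from Theorem \ref{te1}.'', and your argument is just that specialization with the bookkeeping made explicit. Your simplifications ($F(x^1)=x^1/k_1$, $h=\frac{k_1^2}{f_2^2}\left(\frac{f_2'}{f_2}\right)'$, $F_0'=-\frac{f_2^2}{k_1}$) and the resolution of case (iii) via $\left(\frac{f_2'}{f_2}\right)'=0\Rightarrow f_2=a_1e^{a_2x^1}$ with $a_2\neq 0$ encoding nonconstancy all check out.
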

\begin{proof}
It follows from Theorem \ref{te1}.
\end{proof}

\begin{corollary}
If $f_i=k_i\in \mathbb R\setminus \{0\}$ for any $i\in \{1,2,3\}$, then $V=\sum_{k=1}^3V^kE_k$ is a Killing vector field if and only if
\begin{equation*}
\left\{
    \begin{aligned}
&V^1(x^2,x^3)=-\frac{\displaystyle a_1}{\displaystyle k_2}x^2+\frac{\displaystyle a_2}{\displaystyle k_3}x^3+b_1\\
&V^2(x^1,x^3)=\frac{\displaystyle a_1}{\displaystyle k_1}x^1-\frac{\displaystyle a_3}{\displaystyle k_3}x^3+b_2\\
&V^3(x^1,x^2)=-\frac{\displaystyle a_2}{\displaystyle k_1}x^1+\frac{\displaystyle a_3}{\displaystyle k_2}x^2+b_3
    \end{aligned}
  \right. \ ,
\end{equation*}
where $a_1,a_2,a_3,b_1,b_2,b_3\in \mathbb R$.
\end{corollary}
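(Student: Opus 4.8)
The plan is to exploit the fact that constant Lamé coefficients $f_i=k_i$ make the metric flat, so that the Killing system \eqref{s1} degenerates into a constant-coefficient linear system whose solutions are affine. First I would observe that each $f_{ij}$ is a multiple of a first partial derivative of some $f_i$, so every $f_{ij}$ vanishes when the $f_i$ are constant, and \eqref{s1} reduces to
\[\partial_1 V^1=\partial_2 V^2=\partial_3 V^3=0\]
together with
\[k_1\partial_1 V^2+k_2\partial_2 V^1=0,\quad k_2\partial_2 V^3+k_3\partial_3 V^2=0,\quad k_3\partial_3 V^1+k_1\partial_1 V^3=0,\]
where $\partial_i:=\partial/\partial x^i$. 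The three diagonal equations give at once $V^1=V^1(x^2,x^3)$, $V^2=V^2(x^1,x^3)$ and $V^3=V^3(x^1,x^2)$.

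Next I would prove that each component is affine. Differentiating an off-diagonal equation in the coordinate on which one of its two terms does not depend forces a pure second derivative to vanish: differentiating the first relation in $x^1$ and using $\partial_1\partial_2 V^1=\partial_2(\partial_1 V^1)=0$ gives $\partial_1^2 V^2=0$, and the analogous manipulations show that all six pure second derivatives $\partial_i^2 V^j$ vanish. The step I expect to be the main obstacle is killing the mixed second derivatives: here I would differentiate one off-diagonal relation and eliminate the resulting first derivatives using the other two, e.g. differentiating $k_1\partial_1 V^2+k_2\partial_2 V^1=0$ in $x^3$ and substituting $\partial_3 V^2=-\frac{k_2}{k_3}\partial_2 V^3$ and $\partial_3 V^1=-\frac{k_1}{k_3}\partial_1 V^3$ yields $-\frac{2k_1k_2}{k_3}\partial_1\partial_2 V^3=0$, so $\partial_1\partial_2 V^3=0$; the remaining mixed derivatives $\partial_2\partial_3 V^1$ and $\partial_1\partial_3 V^2$ then follow by one further differentiation of the off-diagonal relations. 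Hence each $V^j$ is affine in its two variables.

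Finally I would substitute the affine ansatz $V^1=p_2x^2+p_3x^3+b_1$, $V^2=q_1x^1+q_3x^3+b_2$, $V^3=r_1x^1+r_2x^2+b_3$ into the three off-diagonal equations, which give the three linear constraints $k_1q_1=-k_2p_2$, $k_3q_3=-k_2r_2$ and $k_1r_1=-k_3p_3$. Thus only three of the six linear coefficients are free; writing $a_1:=k_1q_1$, $a_2:=-k_1r_1$, $a_3:=k_2r_2$ and solving for the remaining coefficients reproduces exactly the stated expressions, with $b_1,b_2,b_3\in\mathbb R$ unconstrained. The converse amounts to substituting these $V^j$ back into the reduced system, which holds by inspection. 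As a sanity check, this yields the expected six-parameter isometry algebra of a flat metric -- three translations $b_i$ and three rotations $a_i$ -- and the same result can be obtained more briefly by specialising Corollary \ref{cada} to $f_1=k_1$, for which $F(x^1)=x^1/k_1$.
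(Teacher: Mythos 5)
Your proof is correct, but it takes a genuinely different route from the paper. The paper's entire proof is the one-line remark you relegate to your final sanity check: it specializes Corollary~\ref{cada} (the case $f_1=f_1(x^1)$, $f_2=k_2$, $f_3=k_3$) by taking $f_1=k_1$, so that $F(x^1)=x^1/k_1$ up to an additive constant absorbed into the free constants, and then relabels $c_1,\dots,c_6$ to match the stated $a_i,b_i$ (e.g.\ $a_1=-c_1k_2$, $a_2=c_2k_3$, $a_3=c_4$). Your main argument instead works directly with the Killing system~\eqref{s1}: with constant Lam\'e coefficients all $f_{ij}$ vanish, the diagonal equations give $\partial_iV^i=0$, and your differentiation scheme correctly kills all pure and mixed second derivatives (the elimination $-\tfrac{2k_1k_2}{k_3}\partial_1\partial_2V^3=0$ is the right trick, and the remaining mixed derivatives do follow as you say), after which the affine ansatz and the three constraints $k_1q_1=-k_2p_2$, $k_3q_3=-k_2r_2$, $k_1r_1=-k_3p_3$ reproduce the stated formulas exactly; I checked the identification $a_1=k_1q_1$, $a_2=-k_1r_1$, $a_3=k_2r_2$ against all six coefficients. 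What each approach buys: the paper's proof is instant but inherits the long case analysis of Theorem~\ref{te1} (and one must verify the specialization lands in case (ii) rather than (iii)--(v), since $f_2$ constant forces $f_2'=0$); your proof is self-contained, elementary, and makes transparent why the answer is the six-dimensional flat-space Killing algebra of three translations and three rotations, which the corollary's formulas otherwise obscure.
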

\begin{proof}
It follows from Corollary \ref{cada}.
\end{proof}

\begin{example}
The vector field
$V=k_1^2(-k_3x^2+k_2x^3)\displaystyle \frac{\partial }{\partial x^1}+
k_2^2(k_3x^1-k_1x^3)\displaystyle \frac{\partial }{\partial x^2}+
k_3^2(-k_2x^1+k_1x^2)\displaystyle \frac{\partial }{\partial x^3}$
is a Killing vector field on
\begin{equation*}
\left(\mathbb R^3, \ g=\frac{1}{k_1^2}dx^1\otimes dx^1+\frac{1}{k_2^2}dx^2\otimes dx^2+\frac{1}{k_3^2}dx^3\otimes dx^3\right).
\end{equation*}
\end{example}

\begin{proposition}\label{pr1}
If $f_i=f_i(x^1)$ and $V^i=V^i(x^1)$ for any $i\in \{1,2,3\}$, then $V=\nolinebreak \sum_{k=1}^3V^kE_k$ is a Killing vector field if and only if one of the following assertions hold:

(i) $V^1=c_1\in \mathbb R\setminus \{0\}$, $V^2=c_2$, $V^3=c_3$, with $c_2,c_3\in \mathbb R$, and $f_2$, $f_3$ are constant;

(ii) $V^1=0$, $V^2=\frac{\displaystyle c_2}{\displaystyle f_2}$, $V^3=\frac{\displaystyle c_3}{\displaystyle f_3}$, with $c_2,c_3\in \mathbb R$.
\end{proposition}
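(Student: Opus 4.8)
The plan is to specialize the Lemma to the present situation. Since $f_i=f_i(x^1)$ for every $i$, the system (\ref{s2}) already characterizes the Killing property, so what remains is to feed in the extra hypothesis $V^i=V^i(x^1)$, which annihilates every derivative $\partial V^i/\partial x^2$ and $\partial V^i/\partial x^3$. I would therefore begin by substituting these vanishing terms into the six equations of (\ref{s2}) and recording the drastically simplified system that results.

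Carrying this out, the first equation $\partial V^1/\partial x^1=0$ combines with $V^1=V^1(x^1)$ to give $V^1=c_1$ for some constant $c_1$. In the second and third equations the terms $f_2\,\partial V^2/\partial x^2$ and $f_3\,\partial V^3/\partial x^3$ drop out, leaving $f_1(f_2'/f_2)V^1=0$ and $f_1(f_3'/f_3)V^1=0$; since $f_1,f_2,f_3$ are nowhere zero, these reduce to $f_2'c_1=0$ and $f_3'c_1=0$. The fourth and sixth equations lose their mixed-derivative terms and become, after dividing by $f_1$, the separable linear ODEs $(V^2)'+(f_2'/f_2)V^2=0$ and $(V^3)'+(f_3'/f_3)V^3=0$, equivalently $(f_2V^2)'=0$ and $(f_3V^3)'=0$, so that $V^2=c_2/f_2$ and $V^3=c_3/f_3$ with $c_2,c_3\in\mathbb R$. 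The fifth equation becomes $0=0$ and imposes nothing.

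It then suffices to split on the value of $c_1$. If $c_1\neq 0$, the constraints $f_2'c_1=0$ and $f_3'c_1=0$ force $f_2'=f_3'=0$, so $f_2$ and $f_3$ are constant; then $V^2=c_2/f_2$ and $V^3=c_3/f_3$ are themselves constant, and after renaming the free constants one obtains assertion $(i)$. If $c_1=0$, the two constraints hold automatically and the remaining data $V^1=0$, $V^2=c_2/f_2$, $V^3=c_3/f_3$ are exactly assertion $(ii)$. For the converse, I would substitute each family back into (\ref{s2}) and check that it solves the system, which is immediate. I do not expect a genuine obstacle in this argument: its entire content is the careful accounting of which partial derivatives vanish and the elementary integration of the two first-order ODEs, the only subtlety being the case distinction according to whether $V^1$ vanishes.
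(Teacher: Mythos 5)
Your proposal is correct and follows essentially the same route as the paper: the paper likewise specializes \eqref{s2} under the hypotheses to the reduced system $(V^1)'=0$, $V^1f_2'=0$, $V^1f_3'=0$, $(V^2)'=-\frac{f_2'}{f_2}V^2$, $(V^3)'=-\frac{f_3'}{f_3}V^3$, and then concludes. The only difference is that the paper leaves the final step as immediate, whereas you carry it out explicitly by integrating $(f_2V^2)'=0$ and $(f_3V^3)'=0$ and splitting on whether the constant $V^1$ vanishes, which is exactly the intended argument.
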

\begin{proof}
In this case, \eqref{s2} becomes
\begin{equation*}
\left\{
    \begin{aligned}
&      (V^1)'=0\\
&V^1 f_2'=0\\
&V^1 f_3'=0\\
&(V^2)'=-V^2 \frac{\displaystyle f_2'}{\displaystyle f_2}\\
&(V^3)'=-V^3\frac{\displaystyle f_3'}{\displaystyle f_3}
    \end{aligned}
  \right. \ ,
\end{equation*}
from where we immediately get the conclusion.
\end{proof}

\begin{example}
The vector field $V=\displaystyle \frac{\partial }{\partial x^2}+\displaystyle \frac{\partial }{\partial x^3}$
is a Killing vector field on
\begin{equation*}
\left(\mathbb R^3, \ g=e^{x^1}dx^1\otimes dx^1+e^{2x^1}dx^2\otimes dx^2+e^{3x^1}dx^3\otimes dx^3\right).
\end{equation*}
\end{example}

\begin{remark}
If $f_1=f_2=:f(t)$, condition (B) from the proof of Theorem \ref{te1} becomes
\begin{equation*}
\left\{
    \begin{aligned}
&\left(\frac{\displaystyle f'}{\displaystyle f}\right)'+\left(\frac{\displaystyle f'}{\displaystyle f}\right)^2=k_1\in \mathbb R\\
&f\left(\frac{\displaystyle f'}{\displaystyle f^2}\right)'=k_2\in \mathbb R
    \end{aligned}
  \right. \ \ \Longleftrightarrow
\left\{
    \begin{aligned}
&\frac{\displaystyle f''}{\displaystyle f}=k_1\in \mathbb R\\
&\frac{\displaystyle f''}{\displaystyle f}-2\left(\frac{\displaystyle f'}{\displaystyle f}\right)^2=k_2\in \mathbb R
    \end{aligned}
  \right. \ .
\end{equation*}
Then $0\leq \left(\displaystyle\frac{\displaystyle f'}{\displaystyle f}\right)^2=\displaystyle\frac{\displaystyle k_1-k_2}{\displaystyle 2}$ and we obtain
$k_1\geq k_2$ and $f(t)=k_0e^{\sqrt{\frac{k_1-k_2}{2}}t}$, $k_0\in \mathbb R\setminus \{0\}$.
\end{remark}

\begin{example}
Any nowhere zero smooth function $f_1$ together with the function $f_2=c_1e^{c_2F}$, where $c_1\in \mathbb R\setminus \{0\}$, $c_2\in \mathbb R$, and $F'=\frac{\displaystyle 1}{\displaystyle f_1}$ satisfy the condition:
\begin{equation*}
\displaystyle\frac{f_1'}{f_1}\cdot\displaystyle\frac{f_2'}{f_2}+\left(\displaystyle\frac{f_2'}{f_2}\right)'=0
\end{equation*}
from (iv) of Theorem \ref{te1}.
\end{example}

\begin{example}
Functions $f_1$ and $f_2$ for which
\begin{equation*}
\left(\displaystyle\frac{f_1}{f_2}\right)^2\left[\displaystyle\frac{f_1'}{f_1}\cdot\displaystyle\frac{f_2'}{f_2}+\left(\displaystyle\frac{f_2'}{f_2}\right)'\right]
\end{equation*}
is constant are
\begin{equation*}
f_1(t)=a_1e^{bt} \ \ \textrm{and} \ \ f_2(t)=a_2e^{bt}, \ \ a_1,a_2 \in \mathbb R\setminus \{0\}, b\in \mathbb R.
\end{equation*}
\end{example}

\begin{example}
On the other hand, functions $f_1$ and $f_2$ for which
\begin{equation*}
\left(\displaystyle\frac{f_1}{f_2}\right)^2\left[\displaystyle\frac{f_1'}{f_1}\cdot\displaystyle\frac{f_2'}{f_2}+\left(\displaystyle\frac{f_2'}{f_2}\right)'\right]
\end{equation*}
is nonconstant are
\begin{equation*}
f_1(t)=a_1e^{2bt} \ \ \textrm{and} \ \ f_2(t)=a_2e^{bt}, \ \ a_1,a_2 \in \mathbb R\setminus \{0\}, b\in \mathbb R.
\end{equation*}
\end{example}

\begin{lemma}
If $f_i=f_i(x^i)$ for any $i\in \{1,2,3\}$, then $V=\sum_{k=1}^3V^kE_k$ is a Killing vector field if and only if
\begin{equation}\label{s4}
\left\{
    \begin{aligned}
&      \frac{\displaystyle \partial V^1}{\displaystyle \partial x^1}=0\\
&\frac{\displaystyle \partial V^2}{\displaystyle \partial x^2}=0\\
&\frac{\displaystyle \partial V^3}{\displaystyle \partial x^3}=0\\
&f_1\frac{\displaystyle \partial V^2}{\displaystyle \partial x^1}+f_2\frac{\displaystyle \partial V^1}{\displaystyle \partial x^2}=0\\
&f_2\frac{\displaystyle \partial V^3}{\displaystyle \partial x^2}+f_3\frac{\displaystyle \partial V^2}{\displaystyle \partial x^3}=0\\
&f_3\frac{\displaystyle \partial V^1}{\displaystyle \partial x^3}+f_1\frac{\displaystyle \partial V^3}{\displaystyle \partial x^1}=0
    \end{aligned}
  \right. \ .
\end{equation}
\end{lemma}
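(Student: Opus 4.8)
The plan is to specialize the general Killing criterion \eqref{s1} to the present hypothesis, so that essentially no new computation is required beyond tracking which terms drop out. First I would record the effect of the assumption $f_i=f_i(x^i)$: since each $f_i$ depends only on the single variable $x^i$, one has $\frac{\partial f_i}{\partial x^j}=0$ for every $j\neq i$. Consequently all six of the cross coefficients introduced in the Preliminaries, namely $f_{12},f_{13},f_{21},f_{23},f_{31},f_{32}$, vanish identically, because each of them carries a factor $\frac{\partial f_i}{\partial x^j}$ with $i\neq j$. These are exactly the coefficients that multiply the components $V^1,V^2,V^3$ (rather than their derivatives) throughout \eqref{s1}.

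Next I would substitute these vanishings into the six equations of \eqref{s1} in turn. In the first three equations the two subtracted terms $\frac{f_j}{f_i}\cdot\frac{\partial f_i}{\partial x^j}\,V^j$ disappear, leaving $f_i\frac{\partial V^i}{\partial x^i}=0$ for $i=1,2,3$; since each $f_i$ is nowhere zero on $\mathbb R^3$, I may divide by $f_i$ and obtain precisely the first three equations of \eqref{s4}. In the last three equations of \eqref{s1} the two zeroth-order coefficient terms likewise drop out, and what remains is exactly the fourth, fifth, and sixth equations of \eqref{s4}. Because every manipulation is an equivalence, in particular the division by the nowhere-zero $f_i$ in the first three lines is reversible, this establishes both directions of the stated equivalence at once.

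Since the whole argument is a direct specialization of \eqref{s1}, I do not anticipate any genuine obstacle. The only point deserving a word of care is the passage from $f_i\frac{\partial V^i}{\partial x^i}=0$ to $\frac{\partial V^i}{\partial x^i}=0$, which relies on the standing hypothesis that the functions $f_i$ are nowhere zero and is therefore an equivalence rather than a one-way implication. As in the preceding lemma, the argument can thus be compressed to the remark that the conclusion follows immediately from \eqref{s1}.
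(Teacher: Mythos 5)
Your proposal is correct and follows exactly the paper's route: the paper's entire proof is the remark that the lemma ``follows immediately from \eqref{s1},'' and your argument is precisely that specialization written out in detail, including the (valid) observation that dividing by the nowhere-zero $f_i$ keeps every step an equivalence.
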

\begin{proof}
It follows immediately from \eqref{s1}.
\end{proof}

\begin{theorem}
If $f_1=f_1(x^1)$, $f_2=f_2(x^2)$, $f_3=k_3\in \mathbb R\setminus\{0\}$, then $V=\sum_{k=1}^3V^kE_k$ is a Killing vector field if and only if
\begin{equation*}
\left\{
    \begin{aligned}
&V^1(x^2,x^3)=-cF_2(x^2)+a_1x^3+a_2\\
&V^2(x^1,x^3)=cF_1(x^1)+b_1x^3+b_2\\
&V^3(x^1,x^2)=-a_1k_3F_1(x^1)-b_1k_3F_2(x^2)+b_3
\end{aligned}
  \right. \ ,
\end{equation*}
where $F_1'=\displaystyle\frac{1}{f_1}$, $F_2'=\displaystyle\frac{1}{f_2}$, and $a_1,a_2,b_1,b_2, b_3, c\in \mathbb R$.
\end{theorem}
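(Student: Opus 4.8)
The plan is to specialize the foregoing Lemma and then integrate the resulting first-order system \eqref{s4} by repeated separation of variables. Since the hypothesis $f_1=f_1(x^1)$, $f_2=f_2(x^2)$, $f_3=k_3$ is a particular instance of $f_i=f_i(x^i)$, system \eqref{s4} applies verbatim. Its first three equations give immediately that $V^1=V^1(x^2,x^3)$, $V^2=V^2(x^1,x^3)$ and $V^3=V^3(x^1,x^2)$, which is the structural backbone I will exploit throughout. Note also that $F_1$ and $F_2$, defined by $F_1'=1/f_1$ and $F_2'=1/f_2$, are strictly monotone, hence nonconstant, because $f_1$ and $f_2$ are nowhere zero; this nonvanishing is what will force the separation constants to be genuine constants.

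First I would treat the fourth equation $f_1\partial_{x^1}V^2+f_2\partial_{x^2}V^1=0$. Writing it as $f_1(x^1)\partial_{x^1}V^2(x^1,x^3)=-f_2(x^2)\partial_{x^2}V^1(x^2,x^3)$, the left side is independent of $x^2$ and the right side independent of $x^1$, so both equal a common function $\psi(x^3)$. Integrating in $x^1$ and in $x^2$ respectively yields $V^2=\psi(x^3)F_1(x^1)+A(x^3)$ and $V^1=-\psi(x^3)F_2(x^2)+B(x^3)$ for functions $A,B$ of $x^3$ alone.

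Next I would substitute into the fifth equation $f_2\partial_{x^2}V^3+k_3\partial_{x^3}V^2=0$. Since $\partial_{x^3}V^2=\psi'F_1+A'$ and the left side is independent of $x^3$, differentiating in $x^3$ gives $\psi''F_1+A''=0$; as $F_1$ is nonconstant this separates into $\psi''=0$ and $A''=0$, so $\psi'=\alpha$ and $A'=a$ are constants. Integrating in $x^2$ then produces $V^3=-k_3(\alpha F_1(x^1)+a)F_2(x^2)+E(x^1)$ for some $E=E(x^1)$. Finally, plugging everything into the sixth equation $k_3\partial_{x^3}V^1+f_1\partial_{x^1}V^3=0$ gives $-2k_3\alpha F_2(x^2)+k_3B'(x^3)+f_1(x^1)E'(x^1)=0$; differentiating in $x^2$ and using $F_2'=1/f_2\neq0$ forces $\alpha=0$, so $\psi\equiv c$ is constant, and the remaining separated identity $k_3B'(x^3)=-f_1(x^1)E'(x^1)$ makes both sides constant, giving $B$ affine in $x^3$ and $E=-k_3a_1F_1+\mathrm{const}$. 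Renaming the integration constants then assembles exactly the asserted expressions for $V^1,V^2,V^3$.

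The main obstacle I anticipate is purely bookkeeping: keeping the chain of separation constants consistent across the three coupled equations and verifying at each stage that the ``constant'' produced really is constant, which rests entirely on the nonvanishing of $f_1,f_2$ (equivalently the strict monotonicity of $F_1,F_2$). The converse is then a direct verification: substituting the stated $V^1,V^2,V^3$ back into \eqref{s4} and checking that the four nontrivial equations cancel termwise, using $F_1'=1/f_1$, $F_2'=1/f_2$ and $f_3=k_3$.
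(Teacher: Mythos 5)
Your proposal is correct and takes essentially the same route as the paper: both specialize \eqref{s4} (the paper's \eqref{hy}), separate variables in the fourth equation to get $V^1=-\psi(x^3)F_2+B(x^3)$, $V^2=\psi(x^3)F_1+A(x^3)$, and then force the separation constants via the nonvanishing of $F_1'=1/f_1$ and $F_2'=1/f_2$. The only cosmetic difference is sequencing: you integrate the fifth equation to get $V^3$ up to $E(x^1)$ and then use the sixth as the final constraint killing $\alpha$, while the paper extracts both partials of $V^3$ in parallel, integrates each, and equates the two expressions to kill the corresponding constant $c_1$ — the same cross-term argument in either case.
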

\begin{proof}
In this case, we have
\begin{equation}\label{hy}
\left\{
    \begin{aligned}
&      \frac{\displaystyle \partial V^1}{\displaystyle \partial x^1}=0\\
&\frac{\displaystyle \partial V^2}{\displaystyle \partial x^2}=0\\
&\frac{\displaystyle \partial V^3}{\displaystyle \partial x^3}=0\\
&f_1\frac{\displaystyle \partial V^2}{\displaystyle \partial x^1}+f_2\frac{\displaystyle \partial V^1}{\displaystyle \partial x^2}=0\\
&f_2\frac{\displaystyle \partial V^3}{\displaystyle \partial x^2}+k_3\frac{\displaystyle \partial V^2}{\displaystyle \partial x^3}=0\\
&k_3\frac{\displaystyle \partial V^1}{\displaystyle \partial x^3}+f_1\frac{\displaystyle \partial V^3}{\displaystyle \partial x^1}=0
    \end{aligned}
  \right. \ .
\end{equation}
From the first three equations of (\ref{hy}), we get that
\begin{equation*}
V^1=V^1(x^2,x^3), \ \ V^2=V^2(x^1,x^3), \ \ V^3=V^3(x^1,x^2),
\end{equation*}
thus, from the fourth equation, we deduce that
\begin{equation*}
\left\{
    \begin{aligned}
&\frac{\displaystyle \partial V^1}{\displaystyle \partial x^2}=-\frac{\displaystyle 1}{\displaystyle f_2}F_{12}\\
&\frac{\displaystyle \partial V^2}{\displaystyle \partial x^1}=\frac{\displaystyle 1}{\displaystyle f_1}F_{12}
   \end{aligned}
  \right. \ ,
\end{equation*}
where $F_{12}=F_{12}(x^3)$, which, by integration, give
\begin{equation*}
\left\{
    \begin{aligned}
&V^1(x^2,x^3)=-F_{12}(x^3)F_2(x^2)+G_1(x^3)\\
&V^2(x^1,x^3)=F_{12}(x^3)F_1(x^1)+G_2(x^3)
\end{aligned}
  \right. \ ,
\end{equation*}
where $F_1'=\displaystyle\frac{1}{f_1}$, $F_2'=\displaystyle\frac{1}{f_2}$, $G_1=G_1(x^3)$ and $G_2=G_2(x^3)$.
Then
\begin{equation*}
\left\{
    \begin{aligned}
&f_1(x^1)\frac{\displaystyle \partial V^3}{\displaystyle \partial x^1}(x^1,x^2)=k_3F'_{12}(x^3)F_2(x^2)-k_3G'_1(x^3)\\
&f_2(x^2)\frac{\displaystyle \partial V^3}{\displaystyle \partial x^2}(x^1,x^2)=-k_3F'_{12}(x^3)F_1(x^1)-k_3G'_2(x^3)
\end{aligned}
  \right. \ .
\end{equation*}
By differentiating the above equations with respect to $x^3$, we get
\begin{equation*}
\left\{
    \begin{aligned}
&F''_{12}(x^3)F_2(x^2)=G''_1(x^3)\\
&-F''_{12}(x^3)F_1(x^1)=G''_2(x^3)
\end{aligned}
  \right. \ ,
\end{equation*}
and we conclude that $F''_{12}=0$, hence,
$G''_1=0=G''_2$.
Then
\begin{equation*}
\left\{
    \begin{aligned}
&F_{12}(x^3)=c_1x^3+c_2, \ \ c_1,c_2\in \mathbb R\\
&G_{1}(x^3)=a_1x^3+a_2, \ \ a_1,a_2\in \mathbb R\\
&G_{2}(x^3)=b_1x^3+b_2, \ \ b_1,b_2\in \mathbb R
\end{aligned}
  \right. \ ,
\end{equation*}
and we obtain
\begin{equation*}
\left\{
    \begin{aligned}
&\frac{\displaystyle \partial V^3}{\displaystyle \partial x^1}(x^1,x^2)=\frac{\displaystyle k_3c_1}{\displaystyle f_1(x^1)}F_2(x^2)-\frac{\displaystyle k_3a_1}{\displaystyle f_1(x^1)}\\
&\frac{\displaystyle \partial V^3}{\displaystyle \partial x^2}(x^1,x^2)=-\frac{\displaystyle k_3c_1}{\displaystyle f_2(x^2)}F_1(x^1)-\frac{\displaystyle k_3b_1}{\displaystyle f_2(x^2)}
\end{aligned}
  \right. \ ,
\end{equation*}
which, by integration, give
\begin{equation*}
\left\{
    \begin{aligned}
&V^3(x^1,x^2)=k_3c_1F_1(x^1)F_2(x^2)-k_3a_1F_1(x^1)+L_1(x^2)\\
&V^3(x^1,x^2)=-k_3c_1F_1(x^1)F_2(x^2)-k_3b_1F_2(x^2)+L_2(x^1)
\end{aligned}
  \right. \ ,
\end{equation*}
where $L_1=L_1(x^2)$ and $L_2=L_2(x^1)$.
Equating the above expressions and consequently differentiating the relation with respect to $x^1$ and $x^2$, we get
\begin{equation*}
\left\{
    \begin{aligned}
&\frac{\displaystyle 2k_3c_1}{\displaystyle f_1(x^1)}F_2(x^2)=\frac{\displaystyle k_3a_1}{\displaystyle f_1(x^1)}+L_2'(x^1)\\
&\frac{\displaystyle 2k_3c_1}{\displaystyle f_2(x^2)}F_1(x^1)=-\left[\frac{\displaystyle k_3b_1}{\displaystyle f_2(x^2)}+L_1'(x^2)\right]
\end{aligned}
  \right.  \ ;
\end{equation*}
therefore, $c_1=0$,
and further, we get
\begin{equation*}
L_1(x^2)+k_3b_1F_2(x^2)=L_2(x^1)+k_3a_1F_1(x^1),
\end{equation*}
which must be constant, let's say, $b_0$, and we obtain
\begin{equation*}
\left\{
    \begin{aligned}
&V^1(x^2,x^3)=-c_2F_2(x^2)+a_1x^3+a_2\\
&V^2(x^1,x^3)=c_2F_1(x^1)+b_1x^3+b_2\\
&V^3(x^1,x^2)=-k_3a_1F_1(x^1)-k_3b_1F_2(x^2)+b_0
\end{aligned}
  \right. \ .
\end{equation*}
By a direct computation, we notice that the converse implication holds true, too, i.e., since the component functions $V^1$, $V^2$ and $V^3$ of a vector field $V$ satisfy (\ref{hy}), then $V$ is a Killing vector field.
\end{proof}

\begin{example}
The vector field
$V=e^{x^1}(x^3-e^{x^2})\displaystyle \frac{\partial }{\partial x^1}+
e^{x^2}(x^3+e^{x^1})\displaystyle \frac{\partial }{\partial x^2}-
(e^{x^1}+e^{x^2})\displaystyle \frac{\partial }{\partial x^3}$
is a Killing vector field on
\begin{equation*}
\left(\mathbb R^3, \ g=e^{-2x^1}dx^1\otimes dx^1+e^{-2x^2}dx^2\otimes dx^2+dx^3\otimes dx^3\right).
\end{equation*}
\end{example}

\begin{proposition}\label{pr2}
If $f_i=f_i(x^i)$ and $V^i=V^i(x^i)$ for any $i\in \{1,2,3\}$, then $V=\nolinebreak \sum_{k=1}^3V^kE_k$ is a Killing vector field if and only if $V^i=c_i\in \mathbb R$ for $i\in \{1,2,3\}$.
\end{proposition}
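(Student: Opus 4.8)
The plan is to read off the claim directly from equation \eqref{s4}, since the hypothesis $f_i=f_i(x^i)$ places us exactly in the setting of the preceding Lemma. The additional standing assumption here is that each component depends only on its matching coordinate, i.e. $V^1=V^1(x^1)$, $V^2=V^2(x^2)$, $V^3=V^3(x^3)$. The whole point is that this assumption annihilates every \emph{off-diagonal} partial derivative: $\partial V^i/\partial x^j=0$ whenever $i\neq j$, while $\partial V^i/\partial x^i=(V^i)'$ reduces to an ordinary derivative.

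First I would substitute these vanishing cross-derivatives into the six equations of \eqref{s4}. The first three equations, $\partial V^i/\partial x^i=0$, become $(V^1)'=(V^2)'=(V^3)'=0$, which immediately force $V^1$, $V^2$ and $V^3$ to be (real) constants $c_1,c_2,c_3$. Next I would observe that each of the last three equations is a sum of two terms, and in every such term the partial derivative is an off-diagonal one: for instance in $f_1\,\partial V^2/\partial x^1+f_2\,\partial V^1/\partial x^2$ both $\partial V^2/\partial x^1$ and $\partial V^1/\partial x^2$ vanish by the dependence hypothesis, and similarly for the other two. Hence the last three equations are satisfied identically, with no further constraint on the $f_i$ and no contribution from the already-constant $V^i$.

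For the converse, I would simply note that if $V^i=c_i\in\mathbb R$ for each $i$, then all six quantities appearing in \eqref{s4} vanish (each is either a derivative of a constant or a term carrying such a derivative), so the system holds and $V$ is Killing by the Lemma. Combining the two directions yields the stated equivalence.

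There is honestly no hard step here: the result is an immediate specialization of the Lemma behind \eqref{s4}, and the only thing to be careful about is bookkeeping of which partial derivatives survive under the restriction $V^i=V^i(x^i)$. The ``main obstacle,'' such as it is, is purely notational—making sure one correctly identifies that every coupling term in the last three equations is built from a cross-derivative that the hypothesis kills, so that no hidden condition on $f_1,f_2,f_3$ sneaks in.
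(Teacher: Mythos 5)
Your proof is correct and takes essentially the same route as the paper: the paper's proof likewise reduces \eqref{s4} under the hypothesis $V^i=V^i(x^i)$ to the three equations $(V^1)'=(V^2)'=(V^3)'=0$ (the cross-derivative terms vanishing identically) and concludes immediately. Your write-up merely makes explicit the bookkeeping and the trivial converse that the paper leaves implicit.
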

\begin{proof}
In this case, \eqref{s4} becomes
\begin{equation*}
\left\{
    \begin{aligned}
&      (V^1)'=0\\
&(V^2)'=0\\
&(V^3)'=0
\end{aligned}
  \right. \ ,
\end{equation*}
from where we immediately get the conclusion.
\end{proof}

\begin{example}
The vector field
$V=e^{x^1}\displaystyle \frac{\partial }{\partial x^1}+
e^{x^2}\displaystyle \frac{\partial }{\partial x^2}+
e^{x^3}\displaystyle \frac{\partial }{\partial x^3}$
is a Killing vector field on
\begin{equation*}
\left(\mathbb R^3, \ g=e^{-2x^1}dx^1\otimes dx^1+e^{-2x^2}dx^2\otimes dx^2+e^{-2x^3}dx^3\otimes dx^3\right).
\end{equation*}
\end{example}

According to the previous results, we can state:

\begin{proposition} Let $V_1=\sum_{i=1}^3V_1^iE_i$ and $V_2=\sum_{i=1}^3V_2^iE_i$.

(i) If $f_i=f_i(x^1)$ and $V_k^i=V_k^i(x^1)$ for any $i\in \{1,2,3\}$ and $k\in\{1,2\}$, and $V_1$ and $V_2$ are Killing vector fields, then
\begin{equation*}
V_1^1=V_2^1+\tilde{c_1}, \ \ V_1^i=V_2^i+\tilde{c_i}+\frac{c_i}{f_i} \ \ (c_i\in \mathbb R, i\in \{2,3\}, \tilde{c_i}\in \mathbb R, i\in \{1,2,3\}).
\end{equation*}

(ii) If $f_i=f_i(x^i)$ and $V_k^i=V_k^i(x^i)$ for any $i\in \{1,2,3\}$ and $k\in\{1,2\}$, and $V_1$ and $V_2$ are Killing vector fields, then
\begin{equation*}
V_1^i=V_2^i+c_i \ \ (c_i\in \mathbb R, i\in \{1,2,3\}).
\end{equation*}
\end{proposition}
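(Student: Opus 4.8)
The plan is to exploit the linearity of the Killing equation. Since $\pounds_V g$ is $\mathbb R$-linear in $V$, the Killing vector fields on $(\mathbb R^3,g)$ form a real vector space; in particular, if $V_1$ and $V_2$ are both Killing, then so is their difference $W:=V_1-V_2=\sum_{i=1}^3 W^iE_i$, where $W^i=V_1^i-V_2^i$. Both parts then reduce to applying the classifications already obtained in Propositions \ref{pr1} and \ref{pr2} to the single field $W$ and reading off the permitted form of its components.

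For part (i), I would first check that $W$ satisfies the hypotheses of Proposition \ref{pr1}: the Lam\'e coefficients are unchanged ($f_i=f_i(x^1)$), and since each $V_k^i$ depends only on $x^1$, so does $W^i=V_1^i-V_2^i$. Proposition \ref{pr1} then forces $W$ into one of its two alternatives. In the first, $W^1$ is a constant, $f_2,f_3$ are constant, and $W^2,W^3$ are constant; in the second, $W^1=0$ and $W^i=c_i/f_i$ for $i\in\{2,3\}$. In either alternative $W^1$ is a constant, which I would record as $\tilde{c_1}$, and $W^i$ for $i\in\{2,3\}$ is either a constant (absorbed into $\tilde{c_i}$, with $c_i=0$) or of the form $c_i/f_i$ (with $\tilde{c_i}=0$). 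The single expression $W^i=\tilde{c_i}+c_i/f_i$ covers both cases, and rearranging $W=V_1-V_2$ yields the stated relations.

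Part (ii) is entirely analogous but shorter: with $f_i=f_i(x^i)$ and each $W^i=V_1^i-V_2^i$ depending only on $x^i$, the difference $W$ meets the hypotheses of Proposition \ref{pr2}, whose conclusion is that every component of a Killing field of this type is constant. Hence $W^i=c_i\in\mathbb R$, which is exactly $V_1^i=V_2^i+c_i$.

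The computations here are light, so the only point requiring care—rather than a genuine obstacle—is the bookkeeping in part (i): one must confirm that $W$ remains inside the hypothesis class (components depending only on $x^1$) and that the disjunctive conclusion of Proposition \ref{pr1} collapses into the single formula $\tilde{c_i}+c_i/f_i$. When some $f_i$ is nonconstant the first alternative is excluded and the $c_i/f_i$ term is the operative one; when $f_i$ is constant the two terms are interchangeable, so writing both is a harmless redundancy.
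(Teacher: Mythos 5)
Your proposal is correct and is essentially the paper's proof, which consists precisely of the remark that the assertions follow from Propositions \ref{pr1} and \ref{pr2}. Your one variation---applying those classifications to the difference $W=V_1-V_2$, justified by the $\mathbb R$-linearity of $V\mapsto\pounds_Vg$, instead of to $V_1$ and $V_2$ separately and then subtracting---is a harmless and if anything slightly cleaner way to perform the same reduction, since it collapses the mixed-case bookkeeping (e.g.\ $V_1$ of type (i), $V_2$ of type (ii) with $f_2,f_3$ then constant) into the single observation that $W^i$ is either constant or of the form $c_i/f_i$.
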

\begin{proof}
The assertions follow from Propositions \ref{pr1} and \ref{pr2}.
\end{proof}

\textit{Adara M. Blaga}

\textit{Department of Mathematics}


\textit{West University of Timi\c{s}oara}

\textit{Bd. V. P\^{a}rvan 4, 300223, Timi\c{s}oara, Romania}

\textit{adarablaga@yahoo.com}
\end{document}